\documentclass[11pt, reqno]{amsart}
\usepackage{amsmath,amssymb,amscd,amsthm}

\headheight=8pt
\topmargin=0pt
\textheight=624pt
\textwidth=432pt
\oddsidemargin=18pt
\evensidemargin=18pt

\allowdisplaybreaks[2]

\sloppy

\hfuzz  = 0.5cm 

%
%
\newtheorem{theorem}{Theorem}[section]
\newtheorem{lemma}[theorem]{Lemma}

\theoremstyle{definition}

\newtheorem{remark}[theorem]{Remark}
\newtheorem*{ackno}{Acknowledgments}

%
%
\let\Re=\undefined\DeclareMathOperator*{\Re}{Re}
\let\Im=\undefined\DeclareMathOperator*{\Im}{Im}

\newcommand{\I}{\hspace{0.5mm}\text{I}\hspace{0.5mm}}
\newcommand{\II}{\text{I \hspace{-2.8mm} I} }

\newcommand{\R}{\mathbb{R}}
\newcommand{\C}{\mathbb{C}}

\newcommand{\loc}{\textup{loc}}

\newcommand{\RR}{\mathcal{R}}

\newcommand{\al}{\alpha}
\newcommand{\dl}{\delta}
\newcommand{\Dl}{\Delta}
\newcommand{\eps}{\varepsilon}

\newcommand{\g}{\gamma}

\newcommand{\ld}{\lambda}

\newcommand{\dt}{\partial_t}

\newcommand{\nb}{\nabla}

\numberwithin{equation}{section}
\numberwithin{theorem}{section}

\begin{document}
\title[Energy-critical NLS with non-vanishing boundary conditions]{Global well-posedness of the Gross--Pitaevskii and cubic-quintic
nonlinear Schr\"odinger equations with non-vanishing boundary conditions}

\author{Rowan Killip, Tadahiro Oh, Oana Pocovnicu, and Monica Vi\c{s}an}

\address
{Rowan Killip\\
Department of Mathematics\\
University of California, Los Angeles\\
Math Sciences Building 6167\\
Los Angeles, CA 90095, USA}
\email{killip@math.ucla.edu}

\address{
Tadahiro Oh\\
Department of Mathematics\\
Princeton University\\
Fine Hall\\
Washington Rd.\\
Princeton, NJ 08544-1000, USA}
\email{hirooh@math.princeton.edu}

\address{
Oana Pocovnicu\\
Department of Mathematics\\
Imperial College London\\
South Kensington Campus\\
London, SW7 2AZ, United Kingdom}

\email{o.pocovnicu@imperial.ac.uk}

\address
{Monica Vi\c{s}an\\
Department of Mathematics\\
University of California, Los Angeles\\
Math Sciences Building 6167\\
Los Angeles, CA 90095, USA}
\email{visan@math.ucla.edu}

\begin{abstract}
We consider the Gross--Pitaevskii equation on $\R^4$ and the cubic-quintic nonlinear Schr\"odinger equation (NLS)
on $\R^3$ with non-vanishing boundary conditions at spatial infinity. By viewing these equations as
perturbations to the energy-critical NLS,  we prove that they are globally well-posed in their energy spaces.
In particular, we prove unconditional uniqueness in the energy spaces for these equations.
\end{abstract}

\subjclass[2010]{35Q55}

\keywords{NLS; Gross--Pitaevskii equation, non-vanishing boundary condition}

\maketitle

\section{Introduction}

In this paper, we consider two models of NLS type with non-vanishing boundary conditions at spatial infinity, namely, the Gross--Pitaevskii equation \eqref{GP1}
in four spatial dimensions and the cubic-quintic model \eqref{CQNLS1} in three spatial dimensions.  Both models admit stable constant solutions; in this paper
we study excitations around these.  This constitutes the origin of the non-zero boundary conditions at spatial infinity.  The common theme of the two models
considered in this paper is the fact that the highest power appearing in the nonlinearity is energy-critical.

We start by discussing the Cauchy problem for the Gross--Pitaevskii equation:
\begin{equation} \label{GP1}
\begin{cases}
i\partial_tu+\Delta u=(|u|^2-1)u, & \quad ( t, x)  \in \R\times \R^n, \\
u\big|_{t = 0}=u_0,
\end{cases}
\end{equation}
with the non-vanishing boundary condition:
\begin{equation} \label{BC1}
\lim_{|x|\to \infty} |u(x)| = 1.
\end{equation}

The Gross--Pitaevskii equation \eqref{GP1} appears in various physical problems
\cite{Frisch, Gross, Pitaevskii, SULEM} such as  superfluidity of Helium II,
Bose--Einstein condensation, and  nonlinear optics (``dark" and ``black" solitons, optical vortices).
It constitutes the Hamiltonian evolution corresponding to the Ginzburg--Landau energy:
\begin{equation}\label{HamilGP0}
E(u)=\frac{1}{2}\int_{\R^n}|\nabla u|^2 \,dx + \frac{1}{4}\int_{\R^n}(|u|^2-1)^2\, dx.
\end{equation}

The Cauchy problem for the Gross--Pitaevskii equation \eqref{GP1} (with \eqref{BC1}) was first solved on $\R$ by Zhidkov \cite{Zhidkov} in what are now termed
Zhidkov spaces:
\[
X^k(\R^n):=\{u\in L^{\infty}(\R^n) : \partial^{\alpha}u\in L^2(\R^n), 1\leq |\alpha|\leq k\}.
\]
In spatial dimensions $n=2,3$, B\'ethuel and Saut \cite{Bethuel-Saut} proved global well-posedness in $1+H^1(\R^n)$ and
Gallo \cite{Gallo} proved it in Zhidkov spaces.  Finally, G\'erard \cite{PG1, PG3} proved global well-posedness on $\R^n$, $n=1,2,3$, in the energy space,
that is, the space of functions of finite Ginzburg--Landau energy.  He also proved that when $n = 3, 4$,  the energy space is given by
\begin{equation}
\label{EnergyGP0}
\mathcal{E}_\text{GP}(\R^n) =\Big\{u=\al+v:  |\al|=1, v\in \dot{H}^1(\R^n), |v|^2+2\Re(\bar{\al}v)\in L^2(\R^n)\Big\}.
\end{equation}
Moreover, if $u_0\in \mathcal{E}_{\text{GP}}$, then the corresponding solution $u(t)$ belongs to $\mathcal{E}_\text{GP}$ for all $t\in\R$.
Namely, we have $u(t)=\al+v(t)\in \mathcal{E}_\text{GP}$, where $v(t)$ has the same properties as $v_0$ and satisfies
\begin{equation} \label{GP2a}
\begin{cases}
i\partial_tv+\Delta v=\big(|v|^2+2\Re(\bar{\al}v)\big)(v+\al),\\
v\big|_{t = 0} = u_0-\al.
\end{cases}
\end{equation}
Since $|\al| = 1$, we can write $\al = e^{i\theta}$.  Then, by the gauge invariance of the equation under
$u \mapsto e^{-i\theta} u$,  we can assume that $\theta = 0$ and thus $\al = 1$.  Therefore, $u=1+v$ and $v$ satisfies
\begin{equation}\label{GP2}
\begin{cases}
i\partial_tv+\Delta v
=|v|^2v+2\Re(v)v+  |v|^2 + 2\Re(v), \\
v\big|_{t = 0} =v_0:=u_0-1.
\end{cases}
\end{equation}

With $u=1+v\in \mathcal{E}_\text{GP}(\R^4)$, we have $v\in \dot{H}^1(\R^4)$.  Then, using the Sobolev embedding $\dot{H}^1(\R^4)\subset L^4(\R^4)$ and \eqref{EnergyGP0},
we obtain $\Re(v)\in L^2(\R^4)$, and therefore $v\in H^1_{\text{real}}(\R^4)+i\dot{H}^1_{\text{real}}(\R^4)$. Here, we denoted by
$H^1_\text{real}(\R^4) := H^4(\R^4; \R)$ the Sobolev space of real-valued functions. Thus, in four dimensions the energy space is given by
\begin{equation}
\mathcal{E}_{\text{GP}}(\R^4)  =
\big \{ u=1+v: v \in H_\text{real}^1 (\R^4)+ i \dot H_\text{real}^1(\R^4)\big\}.
\label{EnergyGP}
\end{equation}
In this formulation, the Hamiltonian takes the form
\begin{align}
E(u)=E(1+v) &  = \frac{1}{2}\int_{\R^n} |\nb v|^2 \,dx +\frac{1}{4} \int_{\R^n} \big(|v|^2 + 2 \Re(v)\big)^2 \,dx.
\label{HamilGP}
\end{align}
To simplify notation, we will simply write $E(v)$ instead of $E(1+v)$.

Our main interest is to study the global behavior of solutions to the Gross--Pitaevskii equation \eqref{GP1} on $\R^4$.
In this paper,  we  prove global well-posedness in the energy space.  While of less importance from a physical point of view,
the four dimensional case is very interesting from an analytical point of view.  This is due to the fact that in \eqref{GP2}, the nonlinearity is
the sum of a cubic term and quadratic terms, and the cubic term is energy-critical ($\dot{H}^1_x$-critical) in four space dimensions.

Previously, G\'erard \cite{PG1} constructed global solutions to the Gross--Pitaevskii equation on $\R^4$ for initial data in the energy space
with {\it small} energy.  His proof is based on contraction mapping arguments in Strichartz spaces; correspondingly, the uniqueness statement is for solutions
in the energy space with $\nabla u\in L_{t, \loc}^2 L_x^4$. As the cubic nonlinearity is energy-critical, contraction mapping arguments cannot be
used to prove global existence of solutions with {\it large} energy, even if $v_0\in C^\infty_c(\R^4)$.

Let us now describe briefly what is known about the long-time behavior of solutions to \eqref{GP1}.  Writing $\tilde{u}:=e^{-it}u$, we obtain a solution to the
defocusing cubic nonlinear Schr\"odinger equation
\begin{equation*}
i\partial_t\tilde{u}+\Delta \tilde{u}=|\tilde{u}|^2\tilde{u}
\end{equation*}
with boundary condition $\lim_{|x|\to \infty} |\tilde{u}(x)| = 1$.  The non-vanishing boundary condition makes the long-time dynamics of
the Gross--Pitaevskii equation more complex than that of the usual NLS with zero boundary condition.
For instance, on $\R^n$ with $n=3,4$, solutions to the Gross--Pitaevskii equation with small energy scatter \cite{GNT3, GNT4},
while at higher levels of energy, one can build travelling wave solutions \cite{Bethuel-Saut, BGS, Maris_Existence of TW}. The dynamics of the
usual defocusing cubic NLS on $\R^n$ with $n=3,4$ is more straightforward: all solutions are known to scatter.  In this paper, we do not address the long-time behavior
of solutions to \eqref{GP1}.

Our main result about the Gross--Pitaevskii equation is as follows:

\begin{theorem} \label{THM:GP}
The Gross--Pitaevskii equation \eqref{GP1} is globally well-posed in the energy space $\mathcal{E}_\textup{GP}(\R^4)$.
In particular, solutions are unconditionally unique in $C_t(\R; \mathcal{E}_\textup{GP}(\R^4))$.
\end{theorem}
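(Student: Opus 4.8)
The plan is to study the reduced equation \eqref{GP2} in the space $X := H^1_{\textup{real}}(\R^4) + i\dot H^1_{\textup{real}}(\R^4)$, viewing the cubic term $|v|^2v$ as the energy-critical ($\dot H^1_x$-critical) leading part and $2\Re(v)\,v + |v|^2 + 2\Re(v)$ as a scaling-subcritical perturbation. The argument rests on three pillars: (i) a local well-posedness theory in $X$ with auxiliary control in the critical Strichartz space $\dot S^1$; (ii) an a priori bound on $\|v(t)\|_X$ coming from conservation of the Hamiltonian \eqref{HamilGP}; and (iii) a stability argument that, using the known global theory for the defocusing energy-critical NLS $i\dt w + \Dl w = |w|^2w$ on $\R^4$, upgrades the energy bound to a \emph{global} bound on the critical spacetime norm of $v$. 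Unconditional uniqueness is then obtained by showing that any $C_t X$ solution in fact lies in $\dot S^1_{\loc}$.

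\emph{Local theory and a priori bound.} I would solve the Duhamel formulation of \eqref{GP2} by contraction in a ball of $C_t X(I)\cap \dot S^1(I)$. The cubic term is handled exactly as for the energy-critical NLS: after subdividing, a critical Strichartz norm of $e^{it\Dl}v_0$ is small on each subinterval --- here the time of existence depends on the \emph{profile} of $v_0$, not merely on $\|v_0\|_X$, which is precisely why a contraction argument cannot by itself produce global solutions. The remaining terms are lower order: using $\dot H^1(\R^4)\hookrightarrow L^4(\R^4)$ together with $\Re(v)\in L^2$, each of $2\Re(v)\,v$, $|v|^2$, $2\Re(v)$ is placed in a dual Strichartz norm with a positive power of $|I|$ to spare, hence controlled by $\|v\|_{C_tX}$, so it does not obstruct the fixed point. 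This yields local existence, persistence of regularity in $X$, continuous dependence, and uniqueness within $C_tX\cap \dot S^1_{\loc}$. For these solutions \eqref{HamilGP} is conserved, $\tfrac12\|\nb v(t)\|_{L^2}^2 + \tfrac14\big\||v(t)|^2 + 2\Re v(t)\big\|_{L^2}^2 = E(v_0)$; since $\|\nb v\|_{L^2}$ controls $\|v\|_{L^4}$, hence $\||v|^2\|_{L^2}$, we get $\|\Re v(t)\|_{L^2}\lesssim E(v_0)^{1/2} + E(v_0)$ and therefore $\sup_t\|v(t)\|_X \le C(E(v_0))$ throughout the interval of existence.

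\emph{Globalization via stability.} The key external input is the global well-posedness and global spacetime bounds for the defocusing energy-critical NLS on $\R^4$ (Ryckman and Vi\c{s}an): a solution of $i\dt w + \Dl w = |w|^2w$ with $\|w\|_{L^\infty_t\dot H^1}\le A$ obeys $\|w\|_{\dot S^1(\R)}\le C(A)$. Together with the associated stability theory, this gives: if $\tilde v$ solves \eqref{GP2} up to an error small in a dual Strichartz norm, has bounded $\dot S^1$ norm, and has data close to $v(0)$, then $\|v\|_{\dot S^1}$ is finite with quantitative control. On an arbitrary compact interval $I$ one then iterates this lemma over finitely many short subintervals --- the number and length depending only on $E(v_0)$ and $|I|$, since the quadratic terms are scaling-subcritical and hence, by the a priori bound, small on short intervals --- using on each subinterval the energy-critical NLS solution issued from $v$ at the left endpoint as the comparison function $\tilde v$ and absorbing the quadratic terms into the error. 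This yields $\|v\|_{\dot S^1(I)}\le C(E(v_0),|I|)$ on every compact $I$; by the local theory this precludes finite-time blow-up of the $\dot S^1$ norm, so the solution is global, and uniqueness holds in $C_tX\cap \dot S^1_{\loc}$.

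\emph{Unconditional uniqueness.} It remains to remove the auxiliary hypothesis, i.e.\ to show that \emph{every} $v\in C_t(\R;X)$ solving \eqref{GP2} automatically belongs to $\dot S^1_{\loc}$, whereupon the conditional uniqueness applies. I expect this to be the main obstacle: for an energy-critical nonlinearity one cannot close such a bootstrap merely by shrinking the time interval, since the relevant linear contribution is already of critical size. The remedy is a frequency-localized bootstrap: writing $v = P_{\le N}v + P_{>N}v$, the high-frequency data $P_{>N}v_0$ is small in $\dot H^1$ for $N=N(v_0)$ large, so the energy-critical contribution is controlled by the small-data critical theory, while the low-frequency piece enters only through scaling-subcritical terms that are estimated on short intervals via the $X$-bound; a continuity argument then places $v$ in $\dot S^1$ on a short interval, and iteration using the a priori energy bound propagates this to all of $\R$. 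This establishes unconditional uniqueness and completes the proof of Theorem~\ref{THM:GP}.
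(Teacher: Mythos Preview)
Your global-existence argument is essentially the paper's: treat \eqref{GP2} as a subcritical perturbation of the defocusing energy-critical cubic NLS on $\R^4$, invoke the perturbation lemma (Lemma~\ref{LEM:Perturb}) together with the global $\dot S^1$ bounds \eqref{W1}, and iterate over short subintervals whose number depends only on $E(v_0)$. This part is fine.

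For unconditional uniqueness your route diverges from the paper's, and the sketch has a gap. The paper does \emph{not} attempt to show that an arbitrary $C_tX$ solution $\tilde v$ lies in $\dot S^1_{\loc}$. Instead it exploits the asymmetry that the constructed solution $v$ \emph{is} already in $\dot S^1_{\loc}$ and estimates the difference $\omega=v-\tilde v$ directly at the undifferentiated level: on a short interval one has $\|v\|_{L^6_{t,x}}\le\eta$ (from the $\dot S^1$ bound on $v$) and $\|\omega\|_{L^\infty_tL^4_x}+\lVert\Re\omega\rVert_{L^\infty_tH^1_x}\le\eta$ (from continuity and $\omega(0)=0$); writing the difference of the nonlinearities as $O\bigl(|\omega|^3+|\omega||v|^2+|\omega|^2+|\omega||v|+|\Re\omega|\bigr)$, a single Strichartz estimate for $\|\omega\|_{L^2_tL^4_x}+\lVert\Re\omega\rVert_{L^\infty_tL^2_x}$ closes with small constants and forces $\omega\equiv0$. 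No bootstrap in $\dot S^1$ is needed.

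Your frequency-decomposition bootstrap has two weak points. First, the assertion that ``the low-frequency piece enters only through scaling-subcritical terms'' is not correct: the cross terms such as $|P_{\le N}v|^2\,P_{>N}v$ in $|v|^2v$ are exactly critical, and $P_{\le N}v$ is not in $L^2_x$ in general (only $\Re v$ is), so one cannot simply trade frequency localization for a gain in scaling there. Second, a continuity argument placing $v\in\dot S^1$ on a short interval presupposes that $\|v\|_{\dot S^1([0,t])}$ is finite and continuous in $t$, which is precisely what is unknown for a generic $C_tX$ solution; some regularization, or a bootstrap at a level where finiteness is automatic (as in the paper's undifferentiated estimate), is required. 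A correct frequency-localized argument can likely be assembled, but it is more delicate than your sketch indicates, whereas the paper's direct difference estimate is both simpler and immediately rigorous.
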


\begin{remark}
As the proof of Theorem~\ref{THM:GP} yields spacetime bounds on any compact time interval, our arguments also resolve the global regularity question for \eqref{GP1},
namely, that Schwartz excitations lead to global Schwartz solutions.
\end{remark}

The proof of Theorem \ref{THM:GP} is presented in Section \ref{SEC:GP}.  The idea of the proof is to treat \eqref{GP2} as the energy-critical cubic NLS on $\R^4$
with a subcritical  perturbation, and then to use  the perturbative approach developed by Tao, Vi\c{s}an, and Zhang \cite{TVZ}.
This allows us to construct global solutions to \eqref{GP1} for all initial data in the energy space.  These solutions satisfy local-in-time (Strichartz) spacetime bounds.
Finally, we prove unconditional uniqueness for these solutions, that is, they are unique in the larger class of solutions that are merely continuous (in time) with values in the energy space.

\medskip
The second topic of this paper is the Cauchy problem for the cubic-quintic NLS with non-vanishing boundary condition:
\begin{equation}\label{CQNLS1}
\begin{cases}
i \dt u + \Dl u = \al_1 u - \al_3 |u|^2 u +\al_5 |u|^4 u, & (t, x) \in \R \times \R^3, \\
u\big|_{t = 0} = u_0,
\end{cases}
\end{equation}
where $\al_1, \al_3, \al_5 > 0$ with $\al_3^2 - 4\al_1 \al_5 >0$.  This condition guarantees that  the polynomial
$\al_1 - \al_3 s + \al_5 s^2$ has two distinct real positive roots $r_0^2 > r_1^2 > 0$.  The boundary condition is given by
\begin{equation} \label{BCCQ1}
\lim_{|x|\to \infty} |u(x)| = r_0.
\end{equation}
The choice of the larger root guarantees energetic stability of the constant solution; this constitutes a local minimum of the energy functional \eqref{HamilCQ1} below.

The equation \eqref{CQNLS1} appears in a great variety of physical problems.  It is a model in superfluidity
\cite{Ginsburg1958, Ginsburg1976}, descriptions of bosons \cite{Barashenkov} and  of defectons \cite{Pushakarov1978},
the theory of ferromagnetic and molecular chains \cite{Pushakarov1984, Pushakarov1986}, and in nuclear hydrodynamics \cite{Kartavenko}.

By rescaling (both spacetime and the values of $u$), it suffices to consider the case $r_0^2 = 1$ and $\alpha_5=1$.  Thus, \eqref{CQNLS1} reduces to
\begin{equation}\label{CQNLS2}
\begin{cases}
i \dt u + \Dl u = (|u|^2 - 1) (|u|^2 - r_1^2 ) u,\\
u\big|_{t = 0} = u_0,
\end{cases}
\end{equation}
with the boundary condition:
\begin{equation} \label{BCCQ2}
\lim_{|x|\to \infty} |u(x)| = 1.
\end{equation}
Moreover, the Hamiltonian for \eqref{CQNLS2} is given by
\begin{align}
E(u) = \frac{1}{2} \int_{\R^3} |\nb u|^2 \, dx + \frac{\g}{4}  \int_{\R^3} (|u|^2-1)^2 \, dx + \frac{1}{6} \int_{\R^3} (|u|^2-1)^3 \, dx,
\label{HamilCQ1}
\end{align}
where $\g = 1-r_1^2> 0$.  As in the case of the Gross--Pitaevskii equation, we can write our solution as $u(t) = 1 + v(t)$ and so obtain
\begin{align}\label{CQNLS3}
\begin{cases}
i \dt v + \Dl v  =  |v|^4 v + \RR(v) , & (t, x) \in \R\times \R^3, \\
v\big|_{t = 0}  = v_0.
\end{cases}
\end{align}
Here, $\RR(v)$ represents the deviation from the energy-critical NLS and is given by
\begin{align} \label{CQNLS4}
\RR(v)  & =  |v|^4 + 4|v|^2 v \Re(v)\notag
+  4 |v|^2 \Re(v) + \g |v|^2 v + 4 v \Re(v)^2\notag \\
& \hphantom{XXXXXXXXX}
 +  \g |v|^2 + 4 \Re(v)^2 + 2\g v \Re(v) + 2\g \Re(v).
\end{align}
In this formulation, the Hamiltonian can be written as
\begin{align}
E(u)=E(1+v) = \frac{1}{2}& \int_{\R^3} |\nb v|^2 \, dx \notag \\
&  + \frac{\g}{4}  \int_{\R^3} \big(|v|^2+2\Re(v)\big)^2 \, dx  + \frac{1}{6} \int_{\R^3} \big(|v|^2+2\Re(v)\big)^3 \, dx.
\label{HamilCQ2}
\end{align}
As before,  we will simply write $E(v)$ for  $E(1+v)$.

Unlike the Hamiltonian \eqref{HamilGP} for the Gross--Pitaevskii equation, the Hamiltonian for the cubic-quintic NLS is not sign-definite
(at least when $\gamma<2/3$). In this paper, we define the ``energy space'' as
\begin{equation}
\mathcal{E}_{CQ}(\R^3):= 1+\big( H_\text{real}^1 (\R^3)+ i \dot H_\text{real}^1(\R^3)\big)\cap L^4(\R^3).\label{EnergyCQ0}
\end{equation}
In Subsection \ref{SUBSEC:CQ1}, we show that $u=1+v$ belongs to this energy space $\mathcal{E}_\text{CQ}(\R^3)$ if and only if $|E(u)|<\infty$ and $\Re(v) \in L^2(\R^3)$. This
allows us to prove global well-posedness of equation \eqref{CQNLS2} in $\big( H_\text{real}^1 (\R^3)+ i \dot H_\text{real}^1(\R^3)\big)\cap L^4(\R^3)$. As a consequence, we obtain
the second main result of this paper.

\begin{theorem} \label{THM:CQ}
The cubic-quintic NLS  \eqref{CQNLS2} $($and hence \eqref{CQNLS1} with $\al_3^2 - 4 \al_1 \al_5 > 0)$
is globally well-posed in the energy space $\mathcal{E}_\textup{CQ}(\R^3)$.
In particular, solutions are unconditionally unique in $C_t(\R; \mathcal{E}_\textup{CQ}(\R^3))$.
\end{theorem}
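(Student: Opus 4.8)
The plan is to parallel the proof of Theorem~\ref{THM:GP}, now regarding \eqref{CQNLS3} as the defocusing energy-critical quintic NLS $i\dt v + \Dl v = |v|^4 v$ on $\R^3$ perturbed by the energy-subcritical term $\RR(v)$; recall that the critical equation is globally well-posed with global spacetime bounds depending only on $\|\nb v\|_{L^2_x}$ of the data (Colliander--Keel--Staffilani--Takaoka--Tao). First I would record, via the characterization established in Subsection~\ref{SUBSEC:CQ1}, that $u = 1 + v \in \mathcal{E}_\textup{CQ}(\R^3)$ is equivalent to $v \in \dot H^1_x(\R^3) \cap L^4_x(\R^3)$ together with $\Re v \in L^2_x(\R^3)$. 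This is precisely the structure that makes \eqref{CQNLS4} tractable: each monomial in $\RR(v)$ has degree at most $4$, strictly below the critical degree $5$, and the low-degree terms --- which involve the component $\Re v$ that, unlike $\Im v$, enjoys $L^2_x$ rather than merely $\dot H^1_x$ control --- can therefore be estimated in suitable dual Strichartz spaces by quantities polynomial in $\|\nb v\|_{L^2_x}$, $\|v\|_{L^4_x}$ and $\|\Re v\|_{L^2_x}$ on any given time interval, via Strichartz estimates, interpolation, and $\dot H^1_x(\R^3) \hookrightarrow L^6_x(\R^3)$. Feeding these bounds into the perturbative stability theory of Tao--Vi\c{s}an--Zhang, with the solution of the exact quintic equation as the approximate solution, yields local well-posedness of \eqref{CQNLS3} in the energy space, with finite $L^{10}_{t,x}$-norm on the interval of existence.

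The heart of the matter is to propagate the quantities $\|\nb v(t)\|_{L^2_x}$, $\|v(t)\|_{L^4_x}$, $\|\Re v(t)\|_{L^2_x}$ forward in time. Here the cubic-quintic problem departs sharply from the Gross--Pitaevskii one: the Hamiltonian \eqref{HamilCQ2} is not coercive when $\g < 2/3$ --- testing it against profiles equal to $-1$ on large balls shows it is not even bounded below on $\mathcal{E}_\textup{CQ}(\R^3)$ --- so conservation of $E$ alone does not control $\|\nb v(t)\|_{L^2_x}$. To get around this I would use the pointwise constraint $|u|^2 = 1 + \rho \ge 0$, where $\rho := |v|^2 + 2\Re v$; thus $\rho \ge -1$, whence $|\rho|^3 \le \rho^2 \le 4(\Re v)^2$ on the set $\{\rho < 0\}$. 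Splitting $\int \rho^3\,dx$ over $\{\rho \ge 0\}$ and $\{\rho < 0\}$ in \eqref{HamilCQ2} gives
\[
\tfrac12\|\nb v\|_{L^2_x}^2 + \tfrac{\g}{4}\|\rho\|_{L^2_x}^2 \;\le\; E(v) + \tfrac16\int_{\{\rho < 0\}} |\rho|^3\,dx \;\le\; E(v) + \tfrac23\|\Re v\|_{L^2_x}^2 ,
\]
which, together with $\||v|^2\|_{L^2_x} \le \|\rho\|_{L^2_x} + 2\|\Re v\|_{L^2_x}$, bounds $\|\nb v(t)\|_{L^2_x}$ and $\|v(t)\|_{L^4_x}$ in terms of the conserved energy and $\|\Re v(t)\|_{L^2_x}$. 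It then remains to control $\|\Re v(t)\|_{L^2_x}$, which is where I expect the real difficulty to lie. Differentiating $\|\Re v(t)\|_{L^2_x}^2$ along \eqref{CQNLS3}, the leading contribution is $2\int \nb\Re v \cdot \nb\Im v\,dx \le \|\nb v(t)\|_{L^2_x}^2$, while the remaining contributions involve only quantities already under control; combined with the displayed bound and conservation of $E$, this should be organized into a differential inequality --- possibly supplemented by a renormalized conservation law and a continuity argument --- that precludes blow-up of $\|\Re v(t)\|_{L^2_x}$, and hence of the full energy-space norm, on any compact time interval. This a priori estimate is the crux: the failure of \eqref{HamilCQ2} to be coercive forces one to control $\Re v$ dynamically rather than variationally.

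Granting these bounds, iterating the local theory produces a solution on all of $\R$ obeying $L^{10}_{t,x}$-bounds on every compact time interval (in particular, Schwartz excitations yield global Schwartz solutions). Finally, for the unconditional uniqueness assertion I would argue as for Theorem~\ref{THM:GP}: given any $v \in C_t([0,T]; \mathcal{E}_\textup{CQ}(\R^3))$ solving \eqref{CQNLS3}, a frequency-localized bootstrap --- exploiting Strichartz estimates, the smallness of the energy-critical contribution $|v|^4 v$ at high frequencies, and the gain of a power of the interval length from the strictly subcritical terms in $\RR(v)$ --- shows that $v$ automatically lies in $L^{10}_{t,x}$ on sufficiently short subintervals. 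Once both the constructed solution and an arbitrary energy-space solution are known to be Strichartz-regular, the stability estimates for the quintic NLS force them to coincide, giving uniqueness in $C_t(\R; \mathcal{E}_\textup{CQ}(\R^3))$.
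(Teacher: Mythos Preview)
Your overall strategy --- treat \eqref{CQNLS3} as a perturbation of the defocusing energy-critical quintic NLS and invoke the Tao--Vi\c{s}an--Zhang stability theory, with the a~priori control coming from a Gronwall argument on $\lVert\Re v(t)\rVert_{L^2_x}^2$ --- is exactly the paper's.  Two points deserve comment.  First, your coercivity step is different from the paper's Lemma~\ref{LEM:CQ1}: you exploit the pointwise constraint $\rho:=|v|^2+2\Re v\ge -1$ (equivalently $|u|^2\ge0$) to bound $\int_{\{\rho<0\}}|\rho|^3\,dx$ by $4\lVert\Re v\rVert_{L^2_x}^2$, whereas the paper proceeds purely algebraically via $(a+b)^3\le 4a^3+4b^3$ for $a+b\ge0$ and Young's inequality.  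Your route is arguably cleaner and yields the same conclusion; either feeds identically into the Gronwall argument (the paper's Lemma~\ref{LEM:CQ3}), which you have correctly identified.  Second, for unconditional uniqueness you propose a frequency-localized bootstrap to upgrade an arbitrary $C_t\mathcal{E}_\textup{CQ}$ solution to $L^{10}_{t,x}$ and then appeal to stability in the Strichartz class.  The paper takes a shorter path: it estimates the difference $\omega=v-\tilde v$ directly in $L^2_tL^6_x\cap L^{8/3}_tL^4_x$ together with $\lVert\Re\omega\rVert_{L^\infty_tL^2_x}$, using only that the \emph{constructed} solution $v$ lies in $\dot S^1$ (hence has small $L^{10}_{t,x}$-norm on short intervals) and that $\omega$ is small in the energy norm by continuity and $\omega(0)=0$.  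No frequency decomposition is needed, and the argument closes by a single application of Strichartz plus H\"older.  Your approach should work but is more elaborate than necessary.
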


The proof of Theorem~\ref{THM:CQ} is presented in Section~\ref{SEC:CQ}.  As for Theorem \ref{THM:GP}, the main ingredient in the proof of Theorem \ref{THM:CQ}
is the perturbative approach developed in \cite{TVZ}. In this case, the Hamiltonian $E(v)$ in \eqref{HamilCQ2} does {\it not} control the $\dot{H}^1$-norm of $v$.
Nonetheless, it turns out that the quantity $M(v) : = E(v) + C_0 \int \lvert \Re(v)\rvert^2\, dx $, for some $C_0=C_0(\gamma) > 0$, controls the $\dot{H}^1$-norm of $v$.
While $E(v)$ is conserved under \eqref{CQNLS3}, this new quantity $M(v)$ is no longer conserved.  However, we show that $M(v(t))$ remains finite over
any finite time interval for any initial data $u_0$ in the energy class $\mathcal{E}_\text{CQ}(\R^3)$.  This is sufficient to construct global solutions for such initial
data.  Finally, we establish uniqueness of solutions that are merely continuous in the energy space.

\section{Notations and Perturbation Lemma}

\subsection{Notations}
We often use the notation $X\lesssim Y$ or $X=O(Y)$ whenever there exists some constant $C>0$ so that $X \leq CY$.  We use $X\sim Y$ if
$X\lesssim Y \lesssim X$.  We use $X\ll Y$ if $X \leq c Y$ for some small constant $c>0$.
Note that the derivative operator $\nb$ acts only on the spatial variables.

We say that a pair of exponents $(q, r)$ is Schr\"odinger-admissible if $\frac{2}{q} + \frac{n}{r} = \frac{n}{2}$ with $2\leq q, r \leq \infty$
and $(q, r, n) \ne (2, \infty, 2)$.  Given a spacetime slab $I\times\R^n$, we define the  $\dot{S}^0(I\times\R^n)$-Strichartz norm by
\begin{align*}
\|v\|_{\dot{S}^0(I)} = \|v\|_{\dot{S}^0(I\times\R^n)}:=&\sup \|v\|_{L_t^qL_x^r(I\times \R^n)},
\end{align*}
where the supremum is taken over all admissible pairs $(q, r)$. We define the  $\dot{S}^1(I\times\R^n)$-Strichartz norm by
\begin{align*}
\|v\|_{\dot{S}^1(I)} = \|v\|_{\dot{S}^1(I\times\R^n)}:=&\|\nabla v\|_{\dot{S}^0(I\times\R^n)}.
\end{align*}
We use $\dot{N}^0(I\times \R^n)$ to denote the dual space of $\dot{S}^0(I\times\R^n)$, and
\[\dot{N}^1(I\times\R^n) := \{ u:I\times\R^n\to \C: \nb u \in N^0(I \times \R^n)\}.\]

Next, we recall the Strichartz estimates; see Ginibre and Velo \cite{GV}, Keel and Tao \cite{KeelTao}, Strichartz \cite{Strichartz}, and Yajima \cite{Yajima}.

\begin{lemma}
Let $I$ be an interval in $\R$.  Then, for $j = 0, 1$  we have the following homogeneous Strichartz estimate:
\[\big\| e^{it\Dl} u_0\big\|_{\dot S^j(I\times \R^n)} \lesssim \|u_0\|_{\dot H^j_x(\R^n)}\]
and the inhomogeneous Strichartz estimate:
\[ \bigg\|\int_{t_0}^t e^{i(t-t')\Dl} F(t') \,dt' \bigg\|_{\dot S^j(I\times \R^n)} \lesssim \|F\|_{\dot N^j(I\times \R^n)}.\]
\end{lemma}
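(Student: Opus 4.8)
The plan is to reduce everything to the classical $L^2_x$-based Strichartz inequalities. Since $\nabla$ commutes with the free propagator $e^{it\Delta}$, the case $j=1$ follows from the case $j=0$ applied to $\nabla u_0$ in the homogeneous estimate and — after unwinding the definition of $\dot N^1(I\times\R^n)$ and the Duhamel formula — to $\nabla F$ in the inhomogeneous estimate, so I only need to treat $j=0$. Moreover, the estimates over all of $\R$ restrict to an arbitrary interval $I$ (extending $F$ by zero outside $I$, which does not disturb the retarded integral $\int_{t_0}^t$), so it suffices to take $I=\R$.

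Unwinding the definitions, $\|e^{it\Delta}u_0\|_{\dot S^0}\lesssim\|u_0\|_{L^2_x}$ says precisely that $\|e^{it\Delta}u_0\|_{L^q_tL^r_x}\lesssim\|u_0\|_{L^2_x}$ holds with a uniform implicit constant over all Schr\"odinger-admissible pairs $(q,r)$; and, since $\dot N^0$ is by definition the dual of the intersection space $\dot S^0$ — hence realized as the associated sum space $\sum L^{q'}_tL^{r'}_x$ — the inhomogeneous bound follows by duality from the family of retarded mixed estimates $\big\|\int_{t_0}^t e^{i(t-s)\Delta}F(s)\,ds\big\|_{L^q_tL^r_x}\lesssim\|F\|_{L^{\tilde q'}_tL^{\tilde r'}_x}$ valid for all admissible $(q,r)$ and $(\tilde q,\tilde r)$.

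The engine for these is the pointwise dispersive estimate $\|e^{it\Delta}f\|_{L^\infty_x}\lesssim|t|^{-n/2}\|f\|_{L^1_x}$, read off from the explicit Gaussian kernel, which interpolated with the unitarity $\|e^{it\Delta}f\|_{L^2_x}=\|f\|_{L^2_x}$ gives $\|e^{it\Delta}f\|_{L^r_x}\lesssim|t|^{-n(\frac12-\frac1r)}\|f\|_{L^{r'}_x}$ for $2\le r\le\infty$. For a non-endpoint admissible pair ($q>2$), the $TT^*$ argument turns the homogeneous estimate into $\big\|\int_\R e^{i(t-s)\Delta}F(s)\,ds\big\|_{L^q_tL^r_x}\lesssim\|F\|_{L^{q'}_tL^{r'}_x}$, which then follows from the dispersive bound together with the one-dimensional Hardy--Littlewood--Sobolev inequality in $t$; the mixed-pair and retarded (truncated) versions come from the Christ--Kiselev lemma, applicable since $q>q'$ away from the double endpoint. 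The one remaining case is the endpoint $(q,r)=\big(2,\tfrac{2n}{n-2}\big)$, available here precisely because $n=3,4\ge 3$; this I would obtain via the Keel--Tao argument, namely a dyadic-in-time decomposition of the $TT^*$ kernel combined with a bilinear real-interpolation estimate, which simultaneously delivers the doubly-endpoint inhomogeneous bound.

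The main obstacle is exactly this endpoint: the $TT^*$-plus-Hardy--Littlewood--Sobolev scheme degenerates at $q=2$, and one genuinely needs the sharper Keel--Tao bilinear/interpolation machinery — or one may simply invoke it, since the cited works of Ginibre--Velo, Keel--Tao, Strichartz, and Yajima contain the full statement. Everything else is soft: commuting $\nabla$ through the propagator, the intersection/sum duality between $\dot S^0$ and $\dot N^0$, extension by zero in time, and bookkeeping the admissible range.
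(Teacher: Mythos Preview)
Your outline is correct and is precisely the standard route to the Strichartz estimates: reduce $j=1$ to $j=0$ by commuting $\nabla$ with the propagator, derive the non-endpoint homogeneous and inhomogeneous bounds via dispersive decay, $TT^*$, Hardy--Littlewood--Sobolev, and Christ--Kiselev, and invoke the Keel--Tao bilinear interpolation argument for the endpoint $(q,r)=(2,\tfrac{2n}{n-2})$.

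The paper, however, does not supply a proof of this lemma at all; it simply records the statement and cites Ginibre--Velo, Keel--Tao, Strichartz, and Yajima. So your proposal is not so much a different approach as it is a sketch of the content of those references, which the authors chose to treat as a black box. There is nothing wrong with your write-up as a proof sketch, but for the purposes of the paper one would simply invoke the cited literature rather than reproduce the argument.
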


Some of the Schr\"odinger-admissible pairs for $n = 3, 4$ that we will use below are:
\begin{align*}
\bullet\  n = 4:&  \ (2, 4), \ (6, \tfrac{12}{5}),  \ (\infty, 2),\\
\bullet \ n = 3:& \ (2, 6),  \ (\tfrac{8}{3}, 4), \ (\tfrac{20}{7}, \tfrac{30}{8}), \ (5, \tfrac{30}{11}), \ (10, \tfrac{30}{13}), \  (20, \tfrac{15}{7}),  \ (\infty, 2).
\end{align*}

Lastly, given an interval $I$, we use $\dot{X}^1(I)= \dot{X}^1 (I\times \R^n)$
to denote the following spaces:
\begin{align*}
\bullet \ n = 4: \ \|v\|_{\dot{X}^1(I)}:=&\|\nabla v\|_{L^6_tL^{\frac{12}{5}}_x(I\times \R^4)},\\
\bullet \ n = 3: \ \|v\|_{\dot{X}^1(I)}:=&\|\nabla v\|_{L^{10}_tL^{\frac{30}{13}}_x(I\times \R^3)}.
\end{align*}

\subsection{Perturbation lemma}

In \cite{TVZ}, Tao, Vi\c{s}an, and Zhang considered global well-posedness and scattering questions for
NLS with combined power-type nonlinearities:
\begin{equation}
 i \dt u + \Dl u = \ld_1 |u|^{p_1} u + \ld_2 |u|^{p_2} u,
\qquad 0 < p_1 < p_2 \leq \tfrac{4}{n-2} \quad \text{and} \quad \lambda_1, \lambda_2=\pm 1.
\label{NLS0}
\end{equation}
In particular, when $p_2 = \frac{4}{n-2}$ and $\lambda_2=1$, they treated \eqref{NLS0} as a perturbation of the defocusing energy-critical NLS:
\begin{equation} \label{NLS1}
i\partial_tw+\Delta w= |w|^{\frac{4}{n-2}}w.
\end{equation}

Global well-posedness and scattering for \eqref{NLS1} was proved by Colliander, Keel, Staffilani, Takaoka, and Tao \cite{CKSTT} for
$n = 3$ and subsequently by Ryckman and Vi\c{s}an \cite{RV} and Vi\c{s}an \cite{Visan:Duke} for $n\geq 4$.  The three and four dimensional
results (which underpin this paper) were revisited in \cite{KV:gopher} and \cite{V11} in light of recent developments.
Importantly for the considerations of this paper, all these works also proved global spacetime bounds.  Specifically, if $w$ is a solution to
the energy-critical NLS \eqref{NLS1} with initial data $w_0 \in \dot{H}^1(\R^n)$, then
\begin{align}\label{W1}
\| w\|_{\dot{S}^1(\R\times \R^n)}&\leq C(\|w_0\|_{\dot{H}^1}).
\end{align}

To prove global well-posedness for solutions to \eqref{NLS0} (with $\lambda_1=\lambda_2=1$ and $p_2=\frac4{n-2}$), Tao, Vi\c{s}an, and Zhang combined
the spacetime bounds \eqref{W1} with a perturbation lemma \cite[Lemma 3.8]{TVZ}, which appeared in \cite{TV}.  In this spirit, we regard the Gross--Pitaevskii
equation \eqref{GP2} on $\R^4$ and the cubic-quintic NLS \eqref{CQNLS3} on $\R^3$ as perturbations to \eqref{NLS1} on $\R^4$ and $\R^3$, respectively.
Hence, our basic strategy is also to use a perturbative approach.

The key perturbation result we will use is Lemma~\ref{LEM:Perturb} below, which is \cite[Theorem~3.8]{ClayNotes} and represents a strengthening of
the result in \cite{TV}.  The additional strength is not needed for our purposes here; however, the statement is simpler and it makes our arguments
slightly simpler, too.

\begin{lemma}[Perturbation lemma, \cite{ClayNotes}]\label{LEM:Perturb}
Let $n\geq 3$ and let $I$ be a compact time interval.  Let $\tilde{w}$ be a solution on $I\times \R^n$ to the perturbed equation:
\begin{equation}\label{NLS2}
i\partial_t\tilde{w}+\Delta \tilde{w}=|\tilde{w}|^{\frac{4}{n-2}}\tilde{w} +e
\end{equation}
for some function $e$.  Suppose that there exist $L, E_0, E' > 0$ and some $t_0\in I$ such that
\begin{align}
\label{P1}
\|\tilde{w}\|_{L^{\frac{2(n+2)}{n-2}}_{t, x} (I\times \R^n)} & \leq L, \\
\label{P2}
\|\tilde{w}\|_{L^{\infty}_t \dot{H}^1_x(I\times \R^n)}  & \leq E_0, \\
\label{P3}
\|\tilde{w}(t_0)-w(t_0)\|_{\dot{H}^1(\R^n)}& \leq E'.
\end{align}
Furthermore, assume that we have
\begin{gather}\label{P4}
\bigl\|e^{i(t-t_0)\Delta}(\tilde{w}(t_0)-w(t_0))\big\|_{L^{\frac{2(n+2)}{n-2}}_{t,x}(I\times \R^n)}\leq\eps,\\
\label{P5}
\|\nabla e\|_{\dot{N}^0(I\times\R^n)}\leq\eps,
\end{gather}
for some $0<\eps\leq\eps_0$, where $\eps_0=\eps_0(E_0,E',L)>0$ is a small constant.  Then, there exists a solution $w$ to \eqref{NLS1} on $I\times \R^n$
with the specified initial data $w(t_0)$ at time $t = t_0$ satisfying
\begin{align}
\|w-\tilde{w}\|_{L^{\frac{2(n+2)}{n-2}}_{t,x} (I\times \R^n)}& \leq  C(E_0,E', L)\eps^c \label{P6}\\
\|w-\tilde{w}\|_{\dot{S}^1(I\times\R^n)}& \leq C(E_0,E', L)E'\label{P7}\\
\|w\|_{\dot{S}^1(I\times\R^n)} & \leq  C(E_0,E',L), \label{P8}
\end{align}
where $C(E_0, E', L) > 0$ is a non-decreasing function of $E_0$, $E'$, and $L$ and $c=c(n)>0$.
\end{lemma}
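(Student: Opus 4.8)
This is the standard long-time perturbation (``stability'') theorem for the defocusing energy-critical NLS, so my plan is to combine a short-time version with an iteration over a controlled number of subintervals. Throughout, write $q=\tfrac{2(n+2)}{n-2}$ for the critical spacetime exponent, $F(z)=|z|^{\frac{4}{n-2}}z$, and $\omega:=w-\tilde w$, where $w$ solves \eqref{NLS1} with data $w(t_0)$; existence of $w$ on a neighbourhood of $t_0$ follows from the $\dot H^1$-critical local theory, and the spacetime bounds produced below, together with the blow-up criterion (a solution with finite $L^q_{t,x}$-norm extends), will show that $w$ persists on all of $I$. Note $\|w(t_0)\|_{\dot H^1_x}\le E_0+E'$ by \eqref{P2}--\eqref{P3}. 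I would first prove the conclusion under the extra hypothesis $\|\tilde w\|_{L^q_{t,x}(I)}\le\delta$ for a small \emph{absolute} constant $\delta=\delta(n)$, then partition $I$ into $J\sim 1+(L/\delta)^q$ consecutive subintervals on which this holds — so that $J=J(n,L)$ — and iterate, choosing the threshold $\varepsilon_0=\varepsilon_0(E_0,E',L)$ only at the very end.

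For the short-time step, subtracting the Duhamel formulas gives
\[\omega(t)=e^{i(t-t_0)\Delta}\omega(t_0)-i\int_{t_0}^{t}e^{i(t-t')\Delta}\bigl[F(\tilde w+\omega)-F(\tilde w)+e\bigr](t')\,dt'.\]
Feeding $\|\tilde w\|_{L^q_{t,x}(I)}\le\delta$ into the Duhamel formula for \eqref{NLS2} and absorbing the nonlinear term first yields $\|\tilde w\|_{\dot S^1(I)}\lesssim E_0+\varepsilon$. I would then run two coupled estimates. For $a:=\|\omega\|_{L^q_{t,x}(I)}$ I would use that $(q,q)$ is admissible at regularity $\dot H^1$, i.e.\ $\tfrac2q+\tfrac nq=\tfrac n2-1$, so that the inhomogeneous Strichartz estimate bounds $\bigl\|\int_{t_0}^{t}e^{i(t-t')\Delta}G\,dt'\bigr\|_{L^q_{t,x}(I)}$ by $\|G\|_{L^2_{t,x}(I)}$ for the nonlinear difference (and by $\|\nabla e\|_{\dot N^0(I)}$ for the error, via $\dot S^1\hookrightarrow L^q_{t,x}$); since $|F(\tilde w+\omega)-F(\tilde w)|\lesssim(|\tilde w|+|\omega|)^{\frac4{n-2}}|\omega|$ carries an \emph{explicit} undifferentiated factor of $\omega$, this gives $a\lesssim\varepsilon+(\delta+a)^{\frac4{n-2}}a$, hence $a\lesssim\varepsilon$ once $\delta,a$ are small. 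For $\|\omega\|_{\dot S^1(I)}$ I would apply $\dot S^1$-Strichartz and expand $\nabla\!\bigl(F(\tilde w+\omega)-F(\tilde w)\bigr)$ (an exact polynomial identity when $n=3,4$; a fractional chain rule is needed for $n\ge7$, but not here): every resulting term either carries an explicit undifferentiated $\omega$ in $L^q_{t,x}$ — hence a factor $a$ — or has the shape $|\tilde w|^{\frac4{n-2}}\nabla\omega$, controlled by $\delta^{\frac4{n-2}}\|\omega\|_{\dot S^1(I)}$; thus $\|\omega\|_{\dot S^1(I)}\lesssim\|\omega(t_0)\|_{\dot H^1_x}+\delta^{\frac4{n-2}}\|\omega\|_{\dot S^1(I)}+a\,(E_0+\|\omega\|_{\dot S^1(I)})+\varepsilon$. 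Absorbing the small terms and using $a\lesssim\varepsilon$ gives $\|\omega\|_{\dot S^1(I)}\lesssim E'$ once $\varepsilon\le\varepsilon_0(E_0,E')$; this establishes \eqref{P6}--\eqref{P8} in the short-time case, with $\|w\|_{\dot S^1(I)}\le\|\tilde w\|_{\dot S^1(I)}+\|\omega\|_{\dot S^1(I)}\lesssim E_0+E'$.

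For the iteration, partition $I=\bigcup_{j=0}^{J-1}I_j$ with $\|\tilde w\|_{L^q_{t,x}(I_j)}\le\delta$ and apply the short-time step on $I_0,I_1,\dots$ in turn, using at stage $j$ the data bound $\|\omega(t_j)\|_{\dot H^1_x}\le E'_j$ (from the $\dot S^1$-estimate on $I_{j-1}$) and the smallness $\|e^{i(\cdot-t_j)\Delta}\omega(t_j)\|_{L^q_{t,x}(I_j)}\le\varepsilon_j$. The latter propagates because
\[e^{i(t-t_j)\Delta}\omega(t_j)=e^{i(t-t_0)\Delta}\omega(t_0)-i\int_{t_0}^{t_j}e^{i(t-t')\Delta}\bigl[F(\tilde w+\omega)-F(\tilde w)+e\bigr](t')\,dt',\]
whose $L^q_{t,x}(I_j)$-norm is at most $\varepsilon$ (from \eqref{P4}, since $I_j\subset I$) plus the already-estimated nonlinear and error contributions over $I_0\cup\dots\cup I_{j-1}$; this produces recursions of the schematic form $E'_{j+1}\le 2E'_j$ and $\varepsilon_{j+1}\le C(n,L)(\varepsilon_j+\varepsilon)$ with $E'_0=E'$, $\varepsilon_0\le\varepsilon$, so that $E'_j\le 2^JE'$ and $\varepsilon_j\le C(n,L)^J\varepsilon$ for all $j\le J$. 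Taking $\varepsilon_0=\varepsilon_0(E_0,E',L)$ small enough that $C(n,L)^J\varepsilon_0$ stays below the short-time threshold at every stage — which depends only on $E_0$ and $\sup_jE'_j\le 2^JE'$ — closes the induction. Finally, since $q<\infty$, $\|\omega\|_{L^q_{t,x}(I)}^q=\sum_j\|\omega\|_{L^q_{t,x}(I_j)}^q\lesssim J\,\bigl(C(n,L)^J\varepsilon\bigr)^{q}$, which gives \eqref{P6} (with $c=c(n)>0$; indeed $c=1$ in the cases $n=3,4$ relevant here), while the $L^\infty_t\dot H^1_x$- and $L^q_tL^r_x$-components of $\|\omega\|_{\dot S^1(I)}$ and $\|w\|_{\dot S^1(I)}$ are assembled the same way from the per-interval bounds $\|\omega\|_{\dot S^1(I_j)}\lesssim E'_j$ and $\|\tilde w\|_{\dot S^1(I_j)}\lesssim E_0$, yielding \eqref{P7}--\eqref{P8}.

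The part demanding the most care is the short-time bookkeeping: the nonlinear difference must be split so that each term which would otherwise be merely \emph{bounded} — because it involves $\|\omega\|_{\dot S^1}=O(E')$ rather than the small quantity $\|\omega\|_{L^q_{t,x}}$ — is in fact a product of $\tilde w$-factors in $L^q_{t,x}$, each at most $\delta$, so that smallness of $\delta$ absorbs it; and, dually, $\|\omega\|_{L^q_{t,x}}$ must be controlled through the \emph{undifferentiated} difference and the $\dot H^1$-admissibility of $(q,q)$, since estimating it at the $\dot S^1$-level would reintroduce the non-small factor. The only other delicate point is taming the iteration constants $2^J$, $C(n,L)^J$: this works precisely because $\delta$ — and hence $J$ — is chosen to depend only on $n$ and $L$, never on $E_0$ or $E'$, so that all $E_0$- and $E'$-dependence is confined to the final choice of $\varepsilon_0$ and the output constants.
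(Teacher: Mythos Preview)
The paper does not prove this lemma; it is quoted verbatim as \cite[Theorem~3.8]{ClayNotes} and used as a black box. So there is no ``paper's own proof'' to compare against---only the proof in the cited reference.

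Your outline is the standard one from \cite{ClayNotes} and \cite{TV}: a short-time stability estimate under the auxiliary hypothesis $\|\tilde w\|_{L^q_{t,x}(I)}\le\delta$, followed by an induction over $J=J(L,\delta)$ subintervals, with the smallness of the free evolution of $\omega(t_j)$ propagated via the Duhamel identity you wrote down. The overall architecture is correct and matches the reference.

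One point deserves more care. Your claimed inhomogeneous estimate
\[
\Bigl\|\int_{t_0}^{t}e^{i(t-t')\Delta}G\,dt'\Bigr\|_{L^q_{t,x}(I)}\lesssim \|G\|_{L^2_{t,x}(I)}
\]
is \emph{not} a consequence of the classical (admissible-pair) Strichartz estimates together with Sobolev embedding; it is an inhomogeneous estimate between the $\dot H^1$-admissible pair $(q,q)$ and the $\dot H^{-1}$-admissible pair $(2,2)$, i.e.\ an ``exotic'' Strichartz estimate of Foschi--Vilela type. This is exactly what is used in \cite{ClayNotes} (see the exotic Strichartz lemma preceding their proof of Theorem~3.8), so the step is legitimate, but you should cite it explicitly rather than presenting it as the ordinary inhomogeneous Strichartz estimate. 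Without this, the closed inequality $a\lesssim\varepsilon+(\delta+a)^{4/(n-2)}a$ for $a=\|\omega\|_{L^q_{t,x}}$ does not follow, and one is forced to pass through $\|\omega\|_{\dot S^1}$, which reintroduces the non-small factor $E'$ and loses \eqref{P6}. Once that estimate is in hand, the rest of your bookkeeping (including the remark that $c=1$ for $n=3,4$) is correct.
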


\begin{remark}\label{REM:Perturb}\rm
By the Strichartz inequality together with Sobolev embedding, condition \eqref{P4} is redundant if $E' = O(\eps)$.
\end{remark}

We will apply this lemma with $\tilde w=v$, thereby showing that it differs little from a solution $w$ to \eqref{NLS1}.
This will allow us to transfer the known spacetime bounds \eqref{W1} for $w$ to $v$, at least locally in time.  Note that in
the case of the Gross--Pitaevskii equation, condition \eqref{P2} is guaranteed by the conservation of energy \eqref{HamilGP}.
In the cubic-quintic case, the failure of the energy \eqref{HamilCQ2} to be coercive adds an additional wrinkle to the analysis.


\section{Global Well-Posedness of the Gross--Pitaevskii Equation on $\R^4$} \label{SEC:GP}

In this section, we present the proof of Theorem \ref{THM:GP}.  This breaks naturally into two parts: global existence and unconditional uniqueness.

\subsection{Global existence}
By time reversibility of the equation \eqref{GP2},
we need only consider the problem for $t\geq 0$.  Let $v_0$ be such that  $1+v_0 \in \mathcal{E}_\text{GP}(\R^4)$.  By the usual local well-posedness arguments,
it suffices to prove that there exists $T = T(E(v_0)) > 0$ such that
\begin{equation}\label{GPP2}
\|v\|_{\dot{S}^1([0, T]\times \R^4)} \leq C(E(v_0)),
\end{equation}
that is, we assume the solution $v$ exists on $[0,T]$ and prove that it satisfies \eqref{GPP2}.  Indeed, it is not difficult to prove local existence
via contraction mapping in the space $\dot X^1$ by using the estimates below (\eqref{GPP3c} in particular).  Notice that this argument gives local existence on any
sufficiently short interval $I$ for which $\|e^{it\Delta}v_0\|_{\dot X^1(I)}$ is sufficiently small.  Strichartz estimates together with the monotone/dominated convergence
theorems guarantee that one can always find such a short interval $I$.

Finally, combining \eqref{GPP2} with conservation of the Hamiltonian $E(v)$, one can iterate the local argument, thus obtaining a global solution $v$ to \eqref{GP2} that is
continuous (in time) with values in the energy space and lies in $\dot S^1 (I)$ for any compact time interval $I$.

As mentioned before, the main idea is to view \eqref{GP2} as a perturbation to the energy-critical cubic NLS \eqref{NLS1}, that is, regard $v$ as $\tilde{w}$ in Lemma
\ref{LEM:Perturb} with \[e = 2\Re(v)v + |v|^2 +2\Re(v).\] The argument follows closely the one in Section 4 of \cite{TVZ}.

Let $w$ be the global solution to the energy-critical cubic NLS \eqref{NLS1} with initial data $v_0$.
Divide the interval $\R_+ = [0, \infty)$ into $J=J(E(v_0) ,\eta)$ many subintervals $I_j=[t_j,t_{j+1}]$ such that
\begin{equation} \label{GPP1}
\| w \|_{\dot{X}^1(I_j)} = \|\nabla w\|_{L^6_tL^{\frac{12}{5}}_x(I_j \times \R^4)}\sim \eta,
\end{equation}
for some small $\eta> 0$ to be chosen later.

Fix $T>0$ (to be chosen later in terms of $E(v_0)$), and write $[0, T] =\bigcup_{j=0}^{J'}\big([0,T]\cap I_j\big)$
for some $J' \leq J$, where $[0,T]\cap I_j\neq \emptyset$ for $0\leq j\leq J'$.

Since the nonlinear evolution $w$ is small on $I_j$, it follows that the linear evolution $e^{i(t-t_j)\Delta}w(t_j)$ is also small on $I_j$.
Indeed, using the Duhamel formula:
\begin{equation*}
w(t)=e^{i(t-t_j)\Delta}w(t_j)-i\int_{t_j}^t e^{i(t-s)\Delta}|w(s)|^2w(s)\, ds \quad \text{for} \quad t \in I_j,
\end{equation*}
together with Strichartz estimates, Sobolev embedding, and \eqref{GPP1}, we obtain
\begin{align*}
\big\| e^{i(t-t_j)\Dl} w(t_j)\big\|_{\dot{X}^1(I_j)}
&\leq \|w\|_{\dot{X}^1(I_j)}+C \|w^2\nabla w\|_{L^2_{t}L^\frac{4}{3}_x(I_j\times\R^4)}\\
& \leq \eta + C\|\nabla w\|_{L^6_{t} L^\frac{12}{5}_x (I_j \times\R^4)}\|w\|_{L^6_{t, x}(I_j\times \R^4)}^2\\
& \leq \eta + C\|\nabla w\|_{L^6_tL^\frac{12}{5}_x(I_j\times \R^4)}^3\\
& \leq \eta+C\eta^3,
\end{align*}
where $C$ is an absolute constant. Therefore, if $\eta$ is small enough, we obtain
\begin{align}\label{GPP3}
\big\| e^{i(t-t_j)\Dl} w(t_j)\big\|_{\dot{X}^1(I_j)}
\leq 2\eta.
\end{align}

Now, we estimate $v$ on the first interval  $I_0$. Arguing as before, using \eqref{GPP3} and $w(0) = v_0$, we obtain
\begin{align}
\|v\|_{\dot{X}^1(I_0)}
& \leq \| e^{it\Delta}v_0\|_{\dot{X}^1(I_0)}
+ C\|v^2\nabla v\|_{L^2_t L^\frac{4}{3}_x (I_0\times \R^4)} \notag \\
& \hphantom{XXX}
+C\|v\nabla v\|_{L^\frac{6}{5}_t L^\frac{12}{7}_x (I_0\times \R^4)}
+ C\|\nabla v\|_{L^1_t L^2_x (I_0\times \R^4)} \notag \\
& \leq \| e^{it\Delta}v_0\|_{\dot{X}^1(I_0)}
 +C\|\nabla v\|^3_{L^6_tL^{\frac{12}{5}}_x(I_0\times \R^4)} \notag \\
& \hphantom{XXX}
 +C|I_0|^\frac{1}{2}\| v\|_{L^6_{t, x} (I_0\times \R^4)}\|\nabla v\|_{L^6_tL^{\frac{12}{5}}_x(I_0 \times \R^4)}
+C|I_0|\|\nabla v\|_{L^\infty_tL^2_x(I_0 \times \R^4)} \notag \\
& \leq  2\eta +C\|v\|^3_{\dot{X}^1(I_0)} +CT^\frac{1}{2}\|v\|^2_{\dot{X}^1(I_0)}+ CTE(v_0)^{\frac{1}{2}}.
\label{GPP3c}
\end{align}
Choosing $\eta \ll 1$ and  $T\ll \min\{1,\eta E(v_0)^{-\frac{1}{2}}\}$, it follows from a standard continuity argument that
\begin{equation} \label{GPP4}
\|v\|_{\dot{X}^1(I_0)}\leq 3\eta.
\end{equation}
Thus, by Sobolev embedding, $\|v\|_{L^6_{t, x} (I_0 \times \R^4)} \leq C\eta$ and so condition \eqref{P1} in Lemma \ref{LEM:Perturb}
is satisfied with $L=C\eta$.  This completes the restrictions on the small absolute constant $\eta$.

From \eqref{HamilGP} and the conservation of the Hamiltonian, we have
\[
\|v\|_{L^{\infty}_t\dot{H}^1_x(I_0\times\R^4)}^2\leq 2E(v)=2E(v_0),
\]
and thus condition \eqref{P2} is satisfied with $E_0=[2E(v_0)]^{1/2}$. With $t_0 = 0$, conditions \eqref{P3} and \eqref{P4} are automatically satisfied
(with $E' = 0$ for \eqref{P3}).

Let us now verify condition \eqref{P5}.  Recall that  $e= 2v\Re(v)+|v|^2 + 2\Re( v)$. By the H\"older and Sobolev inequalities and \eqref{GPP4}, we have
\begin{align}
\|\nabla e\|_{\dot{N}^0(I_0)}
&\leq C\|v\nabla v\|_{L^\frac{6}{5}_t L^\frac{12}{7}_x( I_0\times \R^4)}+ C\|\nabla v\|_{L^1_tL^2_x(I_0\times \R^4)}\notag \\
&\leq C|I_0|^{\frac{1}{2}}
\|\nabla v\|^2_{L^6_tL^{\frac{12}{5}}_x(I_0\times \R^4)}
+ C|I_0| \|\nabla v\|_{L^{\infty}_tL^2_x(I_0\times \R^4)}\notag \\
&\leq CT^\frac{1}{2}\|v\|_{\dot{X}^1(I_0)}^2 + CTE(v_0)^{\frac{1}{2}}
\leq CT^\frac{1}{2}\eta^2 + CTE(v_0)^{\frac{1}{2}}.
\label{GPP4a}
\end{align}
Given $\eps>0$, we may choose $T = T(E(v_0), \eps)$ sufficiently small so that
$$
\|\nabla e\|_{\dot{N}^0(I_0)}\leq \eps.
$$
For $\eps\leq \eps_0$ with $\eps_0=\eps_0(E(v_0))$ dictated by Lemma~\ref{LEM:Perturb}, condition \eqref{P5} is thus satisfied.

Therefore, all hypotheses of Lemma~\ref{LEM:Perturb} are satisfied on the interval $I_0$,
provided $T = T(E(v_0),\eps)$ is chosen sufficiently small.  Hence we obtain
\begin{align}
\|w-v\|_{\dot{S}^1(I_0\times\R^4)} & \leq C\bigl(E(v_0)\bigr)\eps. \label{GPP5b}
\end{align}

Next, we consider the second interval $I_1$.  Condition \eqref{P3} is satisfied on $I_1$ with $E'=C(E(v_0))\eps$; indeed, \eqref{GPP5b} yields
\begin{align}\label{E grow}
\|w(t_1)-v(t_1)\|_{\dot{H}^1(\R^4)} \leq\|w-v\|_{\dot{S}^1(I_0\times\R^4)}\leq C\bigl(E(v_0)\bigr)\eps.
\end{align}
In view of Remark~\ref{REM:Perturb}, condition \eqref{P4} is satisfied for $\eps \leq \eps_0(E(v_0))$, where $\eps_0(E(v_0))$
is dictated by Lemma~\ref{LEM:Perturb}.  (Note that $E'$ grows with each successive interval $I_j$, but its ultimate size is $C(J',E(v_0))\eps$.)

By Strichartz with \eqref{E grow}, we have
\begin{align}\label{GPP6}
\big\|e^{i(t-t_1)\Delta}(v(t_1)-w(t_1))\big\|_{\dot{X}^1(I_1)}
\leq C\bigl(E(v_0)\bigr)\eps.
\end{align}
Then, proceeding as in \eqref{GPP3c} and using \eqref{GPP3} and \eqref{GPP6},
\begin{align*}
\|v\|_{\dot{X}^1(I_1)}
& \leq \big\|e^{i(t-t_1)\Delta}v(t_1)\big\|_{\dot{X}^1(I_1)} +C\|v\|_{\dot{X}^1(I_1)}^3
+CT^\frac{1}{2}\|v\|_{\dot{X}^1(I_1)}^2 +CTE(v_0)^{\frac{1}{2}} \\
& \leq \big\|e^{i(t-t_1)\Delta} w(t_1)\big\|_{\dot{X}^1(I_1)}
+\big\|e^{i(t-t_1)\Delta} (v(t_1)-w(t_1))\big\|_{\dot{X}^1(I_1)}\\
&\hphantom{XXX} +C\|v\|_{\dot{X}^1(I_1)}^3
+CT^\frac{1}{2}\|v\|_{\dot{X}^1(I_1)}^2 +CTE(v_0)^{\frac{1}{2}} \\
& \leq  2\eta+C\bigl(E(v_0)\bigr)\eps
+C\|v\|_{\dot{X}^1(I_1)}^3
+CT^\frac{1}{2}\|v\|_{\dot{X}^1(I_1)}^2 +CTE(v_0)^{\frac{1}{2}}.
\end{align*}
As $\eta\ll 1$, choosing $T = T(E(v_0), \eta)$ and $\eps = \eps(E(v_0), \eta) $ sufficiently small, we conclude that
\begin{align} \label{GPP7}
\|v\|_{\dot{X}^1(I_1)}\leq 3\eta
\end{align}
by a continuity argument.  Thus, by Sobolev embedding, we see that condition \eqref{P1} in Lemma \ref{LEM:Perturb}
is satisfied with $L=C\eta$ as before.  Condition \eqref{P2} is satisfied with $E_0=[2E(v_0)]^{1/2}$.  Lastly, as in \eqref{GPP4a}, using \eqref{GPP7}, we have
\begin{align*}
\|\nabla e\|_{\dot{N}^0(I_1)}
&\leq CT^\frac{1}{2}\|v\|_{\dot{X}^1(I_1)}^2 + CTE(v_0)^{\frac{1}{2}}
\leq C T^\frac{1}{2}\eta^2+ CTE(v_0)^{\frac{1}{2}}\leq \eps
\end{align*}
for $T = T(E(v_0), \eps)$ sufficiently small.  Therefore, condition \eqref{P5} is satisfied, provided $\eps \leq \eps_0(E(v_0))$,
where $\eps_0(E(v_0))$ is dictated by Lemma~\ref{LEM:Perturb}.  Hence, by Lemma \ref{LEM:Perturb} applied to the interval $I_1$,
\begin{align*}
\|v-w\|_{\dot{S}^1(I_1)}\leq C\bigl(E(v_0)\bigr)\eps.
\end{align*}

Arguing inductively, we obtain
\begin{align*}
\|v\|_{\dot{X}^1(I_j)}\leq 3\eta
\end{align*}
for all $0\leq j\leq J'$, provided that $\eps $ and $T$ are small, depending only on $E(v_0)$ (and $\eta\ll1$ and $J'$).
As $J'\leq J=J(E(v_0),\eta)$, we obtain $T=T(E(v_0))$ and
\begin{align}
\|v\|_{\dot{X}^1([0,T])}\leq 3\eta J(E(v_0),\eta)\leq C\bigl(E(v_0)\bigr).
\label{GPP8}
\end{align}
Using \eqref{GPP8}, we can estimate the $\dot{S}^1$-norm of the solution $v$ on $[0,T]$ by
\begin{align*}
\|v\|_{\dot{S}^1([0,T])}
&\lesssim \|v_0\|_{\dot{H}^1}+\|v\|_{\dot{X}^1([0,T])}^3
+T^{\frac{1}{2}}\|v\|^2_{\dot{X}^1([0,T])}+T\|\nabla v\|_{L^{\infty}_tL^2_x([0,T]\times \R^4)}\\
&\lesssim  E(v_0)+ C\bigl(E(v_0)\bigr)^3 + T^{\frac{1}{2}}C\bigl(E(v_0)\bigr)^2+TE(v_0)^{\frac{1}{2}}.
\end{align*}
As $T=T(E(v_0))$, we finally obtain that $\|v\|_{\dot{S}^1([0,T])}\leq C(E(v_0))$.
This allows us to iterate the local argument on time intervals of length $T = T(E(v_0)) >0$,
thus constructing a global solution $v$ with initial data $v_0$ such that $1+v_0$ lies in
the energy space $\mathcal{E}_\text{GP}(\R^4) =  H_\text{real}^1 (\R^4)+ i \dot H_\text{real}^1(\R^4)$.
While the  construction only yields  $v(t) \in \dot{H}^1(\R^4)$ for $t \in \R$, we can easily see that
$1+v(t) \in \mathcal{E}_\text{GP}(\R^4)$ for $t \in \R$ from the conservation of the Hamiltonian.

\subsection{Unconditional uniqueness}

We turn now to showing that the global solutions constructed above are unique among those that are continuous (in time) with values in the energy space.
We mimic the arguments in \cite[\S16]{CKSTT}.  To this end, let $v_0$ be such that $1+v_0\in \mathcal{E}_\text{GP}(\R^4)$ and let $v$ be the global
solution to \eqref{GP2} constructed above.  In particular, $v\in \dot S^1(I)$ for any compact time interval $I$.

Let $\tilde v:[0, \tau]\times\R^4\to \C$ be a second solution to \eqref{GP2} with the same initial data
such that $1+ \tilde v \in C_t([0, \tau];\mathcal{E}_\text{GP}(\R^4))$ and write
$\omega:=v-\tilde v$.  As $\omega(0)=0$ and $\omega$ is continuous in time, shrinking $\tau$ if necessary, we may assume
\begin{align}\label{e small}
\lVert\Re(\omega)\rVert_{L_t^\infty H^1_x([0, \tau]\times\R^4)} + \lVert\Im(\omega)\rVert_{L_t^\infty \dot H^1_x([0, \tau]\times\R^4)} \leq \eta
\end{align}
for a small $\eta>0$ to be chosen shortly.  By Sobolev embedding, this yields
\begin{align}\label{l4 small}
\|\omega\|_{L_t^\infty L^4_x([0, \tau]\times\R^4)}\lesssim \eta;
\end{align}
in particular, $\omega\in L^2_tL_x^4([0, \tau]\times\R^4)$.  Recalling that $v\in \dot S^1(I)$ for any compact time interval $I$ and further shrinking $\tau$ if necessary,
we may also assume that
\begin{align}\label{v small}
\|v\|_{L_{t,x}^6([0, \tau]\times\R^4)}\leq \eta.
\end{align}

Writing
\begin{align*}
\bigl[|v|^2v + 2\Re(v)v + |v|^2 & +2\Re(v)\bigr]- \bigl[|\tilde v|^2\tilde v + 2\Re(\tilde v)\tilde v + |\tilde v|^2 +2\Re(\tilde v)\bigr]\\
&= O\bigl(|\omega|^3 + |\omega||v|^2 + |\omega|^2 + |\omega||v| + |\Re(\omega)|\bigr)
\end{align*}
and using the Strichartz inequality together with \eqref{e small}, \eqref{l4 small}, \eqref{v small}, and H\"older, we estimate
\begin{align*}
\|\omega\|_{L_t^2L_x^4} &+ \lVert\Re(\omega)\rVert_{L_t^\infty L_x^2}\\
&\lesssim \|\omega^3\|_{L_t^2L_x^{\frac{4}{3}}} + \|\omega v^2\|_{L_t^{\frac{6}{5}}L_x^{\frac{12}{7}}} + \|\omega^2\|_{L_t^1L_x^2} +\|\omega v\|_{L_t^1L_x^2}
+\lVert\Re(\omega)\rVert_{L_t^1L_x^2}\\
&\lesssim \|\omega\|_{L_t^2L_x^4} \Bigl[\|\omega\|_{L_t^\infty L^4_x}^2
+ \|v\|_{L_{t,x}^6}^2 + \tau^{\frac{1}{2}} \|\omega\|_{L_t^\infty L^4_x}
+ \tau^{\frac{1}{2}} \|v\|_{L_t^\infty L^4_x}\Bigr] +\tau \lVert\Re(\omega)\rVert_{L_t^\infty L_x^2}\\
&\lesssim \bigl(\eta^2+\eta \tau^{\frac{1}{2}}+\tau^{\frac{1}{2}}\bigr)\|\omega\|_{L_t^2L_x^4} + \tau\lVert\Re(\omega)\rVert_{L_t^\infty L_x^2}.
\end{align*}
All spacetime norms in the display above are taken on $[0, \tau]\times\R^4$.  Taking $\eta$ sufficiently small and shrinking $\tau$ further if necessary, we obtain
$$
\|\omega\|_{L_t^2L_x^4([0, \tau]\times\R^4)} + \lVert\Re(\omega)\rVert_{L_t^\infty L_x^2([0, \tau]\times\R^4)}=0,
$$
which proves $v=\tilde v$ almost everywhere on $[0, \tau]\times\R^4$.

By time translation invariance, this argument can be applied to any sufficiently short time interval, which yields global unconditional uniqueness.
This completes the proof of Theorem~\ref{THM:GP}.


\section{Global Well-Posedness of the Cubic-Quintic NLS on $\R^3$}
\label{SEC:CQ}

\subsection{The energy space and energy estimates}
\label{SUBSEC:CQ1}

Let $E(v)$ be the Hamiltonian for \eqref{CQNLS3} given in \eqref{HamilCQ2}. It is not sign-definite in general and it does not seem to
control the $\dot{H}^1_x$-norm of $v$. However, once we add a multiple of the $L^2_x$-norm of $\Re(v)$, the following lemma shows
that it indeed controls the $\dot{H}^1_x$-norm of $v$.

\begin{lemma}\label{LEM:CQ1}
Let $E(v)$ be the Hamiltonian  as in \eqref{HamilCQ2}.  Then, there exists $C_0=C_0(\gamma)>0$ such that
\begin{equation}\label{CQbd1}
\int_{\R^3} |\nb v|^2 \,dx + \int_{\R^3} |v|^6 \,dx  + \gamma \int_{\R^3}  |v|^4 \,dx
\lesssim E(v) + C_0 \int_{\R^3} \lvert\Re(v)\rvert^2 \,dx.
\end{equation}
\end{lemma}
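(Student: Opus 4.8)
The plan is to reduce the integral inequality \eqref{CQbd1} to an elementary pointwise inequality for the potential density, and then prove that inequality by a short case analysis. Write $v=a+ib$ with $a=\Re(v)$, and set $\rho:=|v|^2=a^2+b^2$ and $\sigma:=|v|^2+2\Re(v)=\rho+2a$. Two structural facts will be used throughout: $a^2\le \rho$ (because $b^2\ge 0$), and $\sigma=|1+v|^2-1\ge -1$. With $P(v):=\tfrac{\gamma}{4}\sigma^2+\tfrac{1}{6}\sigma^3$, the Hamiltonian \eqref{HamilCQ2} reads $E(v)=\tfrac12\int_{\R^3}|\nabla v|^2\,dx+\int_{\R^3}P(v)\,dx$. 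Hence it suffices to establish the pointwise bound
\[
P(v)+C_0\,a^2 \ \ge\ c\,\bigl(\rho^3+\gamma\rho^2\bigr)\ =\ c\,\bigl(|v|^6+\gamma|v|^4\bigr)
\]
for suitable constants $c=c(\gamma)>0$ and $C_0=C_0(\gamma)>0$; integrating this and adding $\tfrac12\int|\nabla v|^2$ to both sides yields $E(v)+C_0\int|\Re(v)|^2\,dx\ \ge\ \min(\tfrac12,c)\bigl(\int|\nabla v|^2+\int|v|^6+\gamma\int|v|^4\bigr)$, which is \eqref{CQbd1}. The pointwise bound also gives $P(v)\ge -C_0a^2$, so the negative part of $P(v)$ is integrable whenever $\Re(v)\in L^2$; when $\Re(v)\notin L^2$ the right side of \eqref{CQbd1} is infinite and nothing is to be proved.

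To prove the pointwise bound I would split according to the size of $\sigma$. If $\sigma\ge \rho/2$, then $\sigma\ge 0$, and so $P(v)\ge \tfrac{\gamma}{4}(\rho/2)^2+\tfrac{1}{6}(\rho/2)^3=\tfrac{\gamma}{16}\rho^2+\tfrac{1}{48}\rho^3$, which already dominates the right-hand side with $c=\tfrac{1}{48}$ and no contribution from the $C_0$-term. In every remaining case the constraint $a^2\le\rho$ forces $\rho$ to be bounded: since $\sigma=\rho+2a\ge\rho-2\sqrt{\rho}$, the inequality $\sigma<\rho/2$ implies $\rho<16$, and $\sigma<0$ implies $\rho<4$. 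Thus on those ranges it is enough to dominate the weaker quantity $c(\rho^3+\gamma\rho^2)\le C(\gamma)\rho^2$, and this is where the term $C_0a^2$ is used. Concretely: if $0\le\sigma<\rho/2$ then $a<-\rho/4$, hence $a^2>\rho^2/16$ while $P(v)\ge 0$; if $-\tfrac{3\gamma}{2}\le\sigma<0$ then the factor $\tfrac{\gamma}{4}+\tfrac{\sigma}{6}$ is still nonnegative, so $P(v)\ge 0$, and $a<-\rho/2$ gives $a^2>\rho^2/4$; and if $-1\le\sigma<-\tfrac{3\gamma}{2}$ (which can occur only when $\gamma<\tfrac23$) then $P(v)\ge -\tfrac16\sigma^2\ge -\tfrac16$, while $\sigma<-\tfrac{3\gamma}{2}$ together with $\rho\ge 0$ forces $a=\tfrac12(\sigma-\rho)<-\tfrac{3\gamma}{4}$, so $a^2>\tfrac{9\gamma^2}{16}$ is bounded away from zero. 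In each of these subcases, fixing $c=\tfrac{1}{48}$ and then choosing $C_0=C_0(\gamma)$ large enough --- of size $\sim\gamma^{-2}$ as $\gamma\to 0$, the worst constraint coming from the last subcase --- makes the inequality hold; when $\gamma\ge\tfrac23$ the last subcase is vacuous and $C_0$ may be taken absolute.

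The only real obstacle is precisely that last subcase, i.e.\ the region near $v=-1$ (equivalently $u=1+v=0$), where $P(v)$ is genuinely negative --- a manifestation of the fact that the cubic-quintic Hamiltonian \eqref{HamilCQ2} is not sign-definite. The resolution rests on the simple geometric observation that this bad region is a fixed bounded set of $v$-values on which $\Re(v)$ is bounded away from zero, so that a fixed multiple of $\int|\Re(v)|^2$ more than absorbs the deficit; the rest is bookkeeping of absolute constants together with the observation that $\sigma$ being small or negative already forces $|v|$ to be bounded.
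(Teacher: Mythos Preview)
Your argument is correct. Both proofs reduce \eqref{CQbd1} to a pointwise inequality comparing $|v|^6$ and $|v|^4$ to the potential density $P(v)=\tfrac{\gamma}{4}\sigma^2+\tfrac16\sigma^3$ plus a multiple of $|\Re(v)|^2$, but they reach it by different mechanisms. The paper avoids any case split: writing $|v|^2=\sigma-2\Re(v)$, it uses the algebraic inequality $(a+b)^3\le 4a^3+4b^3$ (valid whenever $a+b\ge0$, so in particular with $a=\sigma$, $b=-2\Re(v)$) to get $|v|^6\le 4\sigma^3+32|\Re(v)|^3$, then absorbs $|\Re(v)|^3$ into $|\Re(v)|^2$ and $|v|^6$ via Young's inequality; a simpler squaring handles $|v|^4$. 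Your approach instead localizes in the value of $\sigma$, exploiting that $\sigma<\rho/2$ forces $\rho<16$ and $\Re(v)<-\rho/4$, so the $C_0|\Re(v)|^2$ term covers everything on the bounded bad set. The paper's route is shorter and yields $C_0$ of order $1+\gamma$, whereas your last subcase produces $C_0\sim\gamma^{-2}$ as $\gamma\to0$; this has no bearing on the lemma as stated, but it is worth noting. Conversely, your argument makes transparent the geometric reason the lemma holds---the region where the potential is negative is a bounded set on which $\Re(v)$ stays away from zero---which the paper's compact algebra obscures.
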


\begin{proof}
By writing $|v|^2 = \big(|v|^2 + 2\Re(v)\big) - 2\Re(v)$, we obtain
\begin{align}\label{v4}
\gamma |v|^4\leq  2\gamma\big(|v|^2 + 2\Re(v)\big)^2 + 8\gamma\lvert\Re(v)\rvert^2
\leq 8\Bigl[\tfrac\gamma 4 \big(|v|^2 + 2\Re(v)\big)^2 + \gamma\lvert\Re(v)\rvert^2\Bigl].
\end{align}

For  $a,b\in\R$ with $a+b\geq 0$, we have the inequality $(a+b)^3\leq 4a^3+4b^3$.  From this, we obtain
\begin{align*}
|v|^6=\big[|v|^2+2\Re(v)+(-2\Re(v))\big]^3\leq 4\big(|v|^2+2\Re(v)\big)^3+32\lvert\Re(v)\rvert^3.
\end{align*}
By Young's inequality,
\begin{align*}
\lvert\Re(v)\rvert^3
&\leq \frac{3}{4\delta^{\frac{4}{3}}}\lvert\Re(v)\rvert^2+\frac{\delta^4}{4}|v|^6,
\end{align*}
for any $\dl > 0$.  Combining these with $\delta=1/2$, we obtain
\begin{align}\label{v6}
|v|^6\leq 8 \big(|v|^2 + 2\Re(v)\big)^3 + 144\lvert\Re(v)\rvert^2 \leq 48 \Bigl[\tfrac16 \big(|v|^2 + 2\Re(v)\big)^3 + 3\lvert\Re(v)\rvert^2  \Bigr].
\end{align}

Collecting \eqref{v4} and \eqref{v6} and using the formula for the Hamiltonian \eqref{HamilCQ2}, we derive the claim.
\end{proof}

In view of this lemma, we can characterize the energy space $\mathcal{E}_{CQ}(\R^3)$ introduced in \eqref{EnergyCQ0} in terms of $E(v)$ and $\lVert\Re(v)\rVert_{L_x^2}$ only:

\begin{lemma}\label{LEM:CQ2}
Let $E(v)$ be   as in \eqref{HamilCQ2}.  Then $1+ v $ belongs to the energy space $\mathcal{E}_{CQ}(\R^3)$
if and only if $|E(v)| < \infty$ and $\textup {Re}(v) \in L^2(\R^3)$.
\end{lemma}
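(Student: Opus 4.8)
The plan is to prove the two implications separately, the substance lying in the backward direction where Lemma~\ref{LEM:CQ1} does the work. Throughout, write $v = v_1 + i v_2$ with $v_1 = \Re(v)$ and $v_2 = \Im(v)$ real-valued, and recall that $1+v \in \mathcal{E}_{CQ}(\R^3)$ means precisely $v_1 \in H_\text{real}^1(\R^3)$, $v_2 \in \dot H_\text{real}^1(\R^3)$, and $v \in L^4(\R^3)$. Two elementary facts will be used repeatedly: the identity $|v|^2 + 2\Re(v) = |1+v|^2 - 1 \ge -1$, and the Sobolev embeddings $H^1(\R^3) \hookrightarrow L^p(\R^3)$ for $2 \le p \le 6$ and $\dot H^1(\R^3) \hookrightarrow L^6(\R^3)$. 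For the forward implication, suppose $1+v \in \mathcal{E}_{CQ}(\R^3)$. Then $\Re(v) = v_1 \in H^1 \subset L^2$, so it remains only to verify that each of the three integrals in \eqref{HamilCQ2} is finite. The gradient term equals $\|\nabla v_1\|_{L^2}^2 + \|\nabla v_2\|_{L^2}^2 < \infty$; for the quartic and sextic terms, the Sobolev embeddings give $v \in L^4 \cap L^6$, hence $|v|^2 \in L^2 \cap L^3$, while $v_1 \in H^1 \subset L^2 \cap L^3$, so $|v|^2 + 2\Re(v) \in L^2 \cap L^3$ and both integrals are finite. Thus $|E(v)| < \infty$.

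For the backward implication, suppose $|E(v)| < \infty$ and $\Re(v) \in L^2$. The first step is to extract $\nabla v \in L^2$ from the finiteness of the sign-indefinite quantity $E(v)$. The gradient and quartic terms in \eqref{HamilCQ2} are nonnegative, and although the sextic integrand $\tfrac16(|v|^2 + 2\Re(v))^3$ may be negative, the identity above confines this to the set $\{|1+v|^2 < 1\}$; there $-1 \le |v|^2 + 2\Re(v) < 0$ forces $v_1 < 0$, and since $|v|^2 \ge 0$ we get $|v|^2 + 2\Re(v) \ge 2v_1 = -2|v_1|$, so $|v|^2 + 2\Re(v)$ has absolute value at most $2|v_1|$ and the sextic integrand is there dominated by a fixed multiple of $|\Re(v)|^2 \in L^1$. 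Consequently the negative part of the energy density is integrable, and $|E(v)| < \infty$ forces each nonnegative contribution to $E(v)$ --- in particular $\tfrac12 \int_{\R^3} |\nabla v|^2 \, dx$ --- to be finite. Now I invoke Lemma~\ref{LEM:CQ1}: integrating the pointwise bounds \eqref{v4} and \eqref{v6} from its proof and using $\Re(v) \in L^2$ yields $\gamma \int_{\R^3} |v|^4 \, dx < \infty$ and $\int_{\R^3} |v|^6 \, dx < \infty$, hence $v \in L^4$ since $\gamma > 0$. Finally, $\nabla v_1 = \Re(\nabla v) \in L^2$ together with $v_1 = \Re(v) \in L^2$ gives $v_1 \in H_\text{real}^1$, and $\nabla v_2 = \Im(\nabla v) \in L^2$ gives $v_2 \in \dot H_\text{real}^1$; thus $v \in \big(H_\text{real}^1(\R^3) + i\dot H_\text{real}^1(\R^3)\big) \cap L^4(\R^3)$, that is, $1+v \in \mathcal{E}_{CQ}(\R^3)$.

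The step I expect to require the most care is this first step of the backward direction: making precise what ``$|E(v)| < \infty$'' should mean for the non-sign-definite functional \eqref{HamilCQ2}, and ruling out a spurious cancellation between a divergent positive part and a divergent negative part of the term cubic in $|v|^2 + 2\Re(v)$. The resolution, sketched above, is that the pointwise lower bound $|v|^2 + 2\Re(v) \ge -1$ localizes all indefinite behaviour to $\{|1+v|^2 < 1\}$, where the offending integrand is controlled by $|\Re(v)|^2 \in L^1$; this is exactly the elementary splitting $|v|^2 = (|v|^2 + 2\Re(v)) - 2\Re(v)$ that drives the proof of Lemma~\ref{LEM:CQ1}. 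Once that is handled, the remainder of the argument is routine bookkeeping with H\"older's inequality and the Sobolev embedding.
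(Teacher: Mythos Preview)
Your proof is correct and follows essentially the same route as the paper, which disposes of the forward direction by Sobolev embedding and the backward direction by invoking Lemma~\ref{LEM:CQ1}. Your additional paragraph isolating the negative part of the sextic integrand on $\{|1+v|^2<1\}$ and bounding it by $\lvert\Re(v)\rvert^2$ is a worthwhile clarification of what ``$|E(v)|<\infty$'' means for this non-coercive functional---a point the paper leaves implicit in its one-line appeal to Lemma~\ref{LEM:CQ1}.
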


\begin{proof}
Using Sobolev embedding, it is easy to see that if $1+ v$ belongs to $\mathcal{E}_{CQ}(\R^3)$, then we have $|E(v)| <\infty$ and $\Re(v) \in L^2(\R^3)$. The converse follows from
Lemma \ref{LEM:CQ1}.
\end{proof}

While the Hamiltonian $E(v)$ is conserved under the flow of \eqref{CQNLS3}, the $L^2_x$-norm of $\Re(v)$ is not. The following lemma controls the growth of this quantity for
solutions in the energy space $\mathcal{E}_{CQ}(\R^3)$.

\begin{lemma} \label{LEM:CQ3}
Let $v$ be a solution to the cubic-quintic NLS \eqref{CQNLS3} on a time interval $[0, \tau]$ with initial data $v_0$ such that $1+v_0 \in \mathcal{E}_{CQ}(\R^3)$.
Then $M(v) := E(v) + C_0 \int_{\R^3} |\textup {Re}(v)|^2 \, dx$ $($with $C_0$ as in Lemma~\ref{LEM:CQ1}$)$ satisfies the following growth estimate:
\begin{equation} \label{CQB0}
M(v(t)) \leq M(v_0) e^{C_1 t}
\end{equation}
for all $t \in [0, \tau ]$ and some $C_1> 0$.

Moreover, there exists $C\big(E(v_0), \lVert\Re(v_0)\rVert_{L^2(\R^3)}, \tau\big) > 0$ such that
\begin{equation} \label{CQB0a}
\sup_{t\in[0,\tau]}\|v(t)\|_{\mathcal{E}_{CQ}(\R^3)} \leq C\big(E(v_0), \lVert\Re(v_0)\rVert_{L^2(\R^3)}, \tau\big),
\end{equation}
where
\[\|f\|_{\mathcal{E}_{CQ}(\R^3)}: = \lVert\Re(f)\rVert_{H^1(\R^3)} + \lVert\Im(f)\rVert_{\dot{H}^1(\R^3)} + \|f\|_{L^4(\R^3)}.\]
In particular, the $\dot{H}^1$-norm of the solution does not blow up in finite time.
\end{lemma}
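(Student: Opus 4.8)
The plan is to differentiate $M(v(t))$ in time and show the derivative is controlled by $M(v(t))$ itself (plus a constant), so that Gr\"onwall's inequality yields \eqref{CQB0}. Since $E(v)$ is conserved under \eqref{CQNLS3}, we have $\frac{d}{dt}M(v(t)) = C_0 \frac{d}{dt}\int |\Re(v)|^2\,dx$, so everything reduces to bounding the time derivative of $\lVert\Re(v)\rVert_{L^2}^2$. First I would compute this derivative using the equation \eqref{CQNLS3}: writing $v = p + iq$ with $p = \Re(v)$, $q = \Im(v)$, one gets $\frac{d}{dt}\int p^2\,dx = 2\int p\, \dt p\,dx$, and from $i\dt v + \Dl v = |v|^4 v + \RR(v)$ one reads off $\dt p = -\Dl q + \Im(|v|^4 v) + \Im(\RR(v))$. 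The Laplacian term contributes $-2\int p\,\Dl q\,dx = 2\int \nb p\cdot\nb q\,dx$, which is bounded by $\lVert\nb v\rVert_{L^2}^2 \lesssim M(v)$ via Lemma~\ref{LEM:CQ1} (note $\gamma > 0$, so the $\dot H^1$-norm, the $L^6$-norm, and the $L^4$-norm of $v$ are all $\lesssim E(v) + C_0\lVert p\rVert_{L^2}^2 = M(v)$, using that $E(v)\le M(v)$ and $\lVert p\rVert_{L^2}^2 \le C_0^{-1}M(v)$ — here one should double-check the sign/coercivity bookkeeping so that $M(v)$ genuinely dominates all these pieces). The nonlinear terms $\int p\,\Im(|v|^4 v)\,dx$ and $\int p\,\Im(\RR(v))\,dx$ are estimated by brute force: every monomial in $|v|^4 v$ and in $\RR(v)$ (see \eqref{CQNLS4}) is a product of at most five factors of $v$ or $\bar v$ and possibly factors of $\Re(v)$; multiplying by $p$ and integrating, one distributes the factors into $L^2$, $L^4$, and $L^6$ norms (using H\"older with exponents summing correctly in three dimensions) and bounds each resulting norm by a power of $M(v)$. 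The lowest-order terms like $2\g\Re(v)$ produce $\int p\cdot p\,dx = \lVert p\rVert_{L^2}^2 \lesssim M(v)$ directly.

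The key point to verify is that after collecting all contributions one obtains
\[
\frac{d}{dt}M(v(t)) \;\le\; C_1\bigl(1 + M(v(t))^{k}\bigr)
\]
for some power $k$; but in fact, because $M \ge$ (a positive constant) along the flow — or more simply because one can absorb constants — one should arrange the estimate in the form $\frac{d}{dt}M(v(t)) \le C_1 M(v(t))$ after noting $M(v(t)) \gtrsim \lVert\nb v(t)\rVert_{L^2}^2 + \lVert v(t)\rVert_{L^4}^4 + \lVert v(t)\rVert_{L^6}^6 + \lVert\Re v(t)\rVert_{L^2}^2$ and that every term in the derivative is a monomial in these quantities of \emph{homogeneous degree at most one} once measured against $M$ (the quadratic-in-$v$ terms give degree $\le 1$, the higher terms give degree $\le 1$ as well since, e.g., $\int p |v|^5 \lesssim \lVert p\rVert_{L^2}\lVert v\rVert_{L^4}^2\lVert v\rVert_{L^{12}}^3$ and $\lVert v\rVert_{L^{12}}\lesssim \lVert\nb v\rVert_{L^2}^{1/2}\lVert v\rVert_{L^6}^{1/2}$ in $\R^3$, so the product is $\lesssim M^{1/2}\cdot M^{1/2}\cdot M^{3/4} = M^{7/4}$ — so actually one does get higher powers). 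Because of this, the honest statement is a differential inequality $M' \le C_1(M + M^{k})$, which still gives finiteness of $M$ on any finite interval (the solution of $y' = C_1(y+y^k)$ does not blow up in, say, unit time provided the initial datum is finite); one then either accepts the slightly weaker bound "$M(v(t))$ stays finite on $[0,\tau]$" or, to match the clean exponential form \eqref{CQB0}, one first runs this argument on a short interval where $M$ stays within a factor $2$ of $M(v_0)$, obtaining $M' \le C_1(M(v_0)) M$ there, and iterates. I would present it in whichever form the subsequent sections actually use; the exponential statement \eqref{CQB0} can be recovered by this bootstrap since the time of existence/smallness depends only on $M(v_0)$.

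Finally, \eqref{CQB0a} follows immediately: by Lemma~\ref{LEM:CQ1} and $M(v(t)) \le M(v_0)e^{C_1 t}$ (or the finite-on-$[0,\tau]$ version) we control $\lVert\nb v(t)\rVert_{L^2}$, $\lVert v(t)\rVert_{L^4}$, and $\lVert v(t)\rVert_{L^6}$; together with $\lVert\Re(v(t))\rVert_{L^2}^2 \le C_0^{-1} M(v(t))$ this bounds $\lVert\Re(v(t))\rVert_{H^1} + \lVert\Im(v(t))\rVert_{\dot H^1} + \lVert v(t)\rVert_{L^4}$, i.e. $\lVert v(t)\rVert_{\mathcal{E}_{CQ}(\R^3)}$, by a constant depending only on $E(v_0)$, $\lVert\Re(v_0)\rVert_{L^2}$, and $\tau$. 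In particular $\lVert\nb v(t)\rVert_{L^2}$ does not blow up in finite time.

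The main obstacle is the bookkeeping in the second paragraph: one must carefully check (i) that Lemma~\ref{LEM:CQ1}'s constant $C_0$ and the sign structure really make $M(v)$ control all of $\lVert\nb v\rVert_{L^2}^2$, $\lVert v\rVert_{L^4}^4$, $\lVert v\rVert_{L^6}^6$, and $\lVert\Re v\rVert_{L^2}^2$ simultaneously, and (ii) that every one of the ten-odd monomials in $\Im(|v|^4 v) + \Im(\RR(v))$, after pairing with $\Re(v)$ and integrating, admits a H\"older estimate whose exponents are admissible in $\R^3$ and whose right-hand side is a finite power of $M$. Neither step is deep, but the second is lengthy and error-prone, and the homogeneity count is what determines whether one gets the clean $e^{C_1 t}$ bound directly or only after the short-interval bootstrap described above.
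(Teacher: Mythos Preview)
Your overall strategy matches the paper's exactly: use conservation of $E(v)$ so that $\frac{d}{dt}M(v) = C_0\,\frac{d}{dt}\int\lvert\Re(v)\rvert^2\,dx$, compute this derivative from the equation, handle the Laplacian term by integration by parts, and estimate the nonlinear terms. The one place you go astray is the estimate of the top-order nonlinear piece, which leads you into an unnecessary bootstrap.

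The term you write as $\int p\,|v|^5$ is really $\int \Re(v)\,|v|^4\,\Im(v)\,dx$, and since $\lvert\Re(v)\rvert\,\lvert\Im(v)\rvert \le |v|^2$ pointwise, this is bounded directly by $\int |v|^6\,dx$, which by Lemma~\ref{LEM:CQ1} is $\lesssim M(v)$. The paper handles all five nonvanishing contributions to $\int \Re(v)\,\Im\bigl(|v|^4 v + \RR(v)\bigr)\,dx$ in the same spirit: each is bounded, via elementary pointwise inequalities and Young's inequality, by a linear combination of $\int|\nb v|^2$, $\int|v|^6$, $\int|v|^4$, and $\int\lvert\Re(v)\rvert^2$, every one of which is $\lesssim M(v)$ by Lemma~\ref{LEM:CQ1}. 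Hence one obtains the clean linear differential inequality $\frac{d}{dt}M(v(t)) \le C_1 M(v(t))$ directly, and Gr\"onwall gives \eqref{CQB0} with no bootstrap. Your H\"older splitting $\lVert p\rVert_{L^2}\lVert v\rVert_{L^4}^2\lVert v\rVert_{L^{12}}^3$ is valid but throws away the simple pointwise structure and produces the spurious $M^{7/4}$; there is no need for it.
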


\begin{proof}
We prove the lemma for smooth $v$.  From \eqref{CQNLS3}, we have
\begin{align}
\dt  \int_{\R^3} & \lvert\Re(v)\rvert^2 \, dx  = 2 \int_{\R^3} \Re(v)\,  \dt \Re(v) \,dx \notag \\
& = - 2 \int_{\R^3} \Re(v) \Im(\Dl v) \,dx + 2 \int_{\R^3} \Re(v) \Im \big(|v|^4v +  \RR(v)\big) \,dx
= : \I + \II. \label{CQB1}
\end{align}
Integrating by parts, we have
\begin{align}
|\I| = \bigg| 2 \int_{\R^3} \Re(\nb v) \cdot \Im(\nb v)\, dx  \bigg| \lesssim \int_{\R^3}  |\nb v |^2\, dx,
\label{CQB2}
\end{align}
while from \eqref{CQNLS4}, we have
\begin{align*}
\II & = 2 \int_{\R^3}
\Re(v)  |v|^4 \Im (v)
+ 4 \Re(v)^2 |v|^2 \Im(v)  \\
& \hphantom{XXXX}
+ \g \Re(v) |v|^2 \Im(v)
+ 4 \Re(v)^3 \Im(v)
+ 2\g \Re(v)^2 \Im(v) \,dx \\
& = : \II_1 + \II_2 + \II_3 + \II_4 +\II_5.
\end{align*}

Trivially, $ |\II_1| \lesssim \int_{\R^3} |v|^6\, dx .$  By interpolation and Young's inequality,
\begin{align}
|\II_2| \lesssim  \int_{\R^3}\lvert\Re(v)\rvert^4\, dx + \int_{\R^3} |v|^6\, dx
 \lesssim  \int_{\R^3}\lvert\Re(v)\rvert^2\, dx + \int_{\R^3} |v|^6\, dx.
\label{CQB3}
\end{align}
Next, by Cauchy's inequality ($ab \leq \frac{a^2}{2}+ \frac{b^2}{2}$ for $a, b \in \R$),   
\begin{align}
|\II_3| + |\II_4| \lesssim  \int_{\R^3}\lvert\Re(v)\rvert^2 \,dx + \int_{\R^3} |v|^6 \,dx
\label{CQB4}
\end{align}
and
\begin{align}
|\II_5| \lesssim  \int_{\R^3}\lvert\Re(v)\rvert^2\, dx  + \int_{\R^3}|v|^4\, dx.
\label{CQB5}
\end{align}

Collecting \eqref{CQB1} through \eqref{CQB5} and using conservation of energy and Lemma~\ref{LEM:CQ1}, we conclude that
\begin{align*}
\dt  M(v(t)) = C_0 \,\dt\! \int_{\R^3} & \lvert\Re(v)\rvert^2 \, dx \leq C_1 \Bigl[E(v) + C_0 \int_{\R^3} \lvert\Re(v)\rvert^2\,dx\Bigr] =C_1 M(v(t)).
\end{align*}
This yields \eqref{CQB0}.  Finally, \eqref{CQB0a} follows from \eqref{CQB0} and Lemma \ref{LEM:CQ1}.
\end{proof}

We now turn to the proof of Theorem \ref{THM:CQ}, which we again divide into two pieces: global existence and unconditional uniqueness.

\subsection{Global existence} \label{SUBSEC:CQ2}
As in Section \ref{SEC:GP}, we view \eqref{CQNLS3}
as a perturbed energy-critical quintic NLS on $\R^3$ with the perturbation $e =\RR(v)$ defined in \eqref{CQNLS4}
and apply Lemma \ref{LEM:Perturb}. The argument is basically the same as that in Section \ref{SEC:GP} with one difference:
As the conserved Hamiltonian $E(v)$ no longer controls the $\dot{H}^1_x$-norm of the solution, we have to rely instead on Lemma~\ref{LEM:CQ3}.
Notice that the bound is not uniform in time; however, by Lemma~\ref{LEM:CQ3},
\begin{equation} \label{CQQ0}
\|v(t)\|_{\dot{H}^1(\R^3)} \leq C\big(E(v_0), \lVert\Re(v_0)\rVert_{L^2(\R^3)}, \tau\big) = : N(v_0, \tau),
\end{equation}
for any $0 \leq t \leq \tau$ for which the solution $v$ exists.

Let $v_0$ be such that $1+v_0 \in \mathcal{E}_\text{CQ}(\R^3)$ and fix $\tau>0$.  Our goal is to construct a solution $v$ to \eqref{CQNLS3} on $[0, \tau]$.
It suffices to prove that there exists $T = T(N(v_0, \tau)) > 0$ such that
\begin{equation}
\|v\|_{\dot{S}^1([T_0, T_0+ T]\times \R^3)} \leq C(N(v_0, \tau))
\label{CQQ2}
\end{equation}
as long as $[T_0, T_0+ T] \subset [0, \tau]$.  This allows us to iterate the local argument and extend the solution $v$ to $[0, \tau]$.
Since the choice of $\tau > 0$ is arbitrary, this yields a global solution $v$ to \eqref{CQNLS2}, which is unique in $\dot S^1_{\loc}$.

Fix $T> 0$ to be chosen later (depending on $N(v_0, \tau)$) and suppose $T_0\geq 0$ is such that $[T_0, T_0+ T] \subset [0, \tau]$.
Let $w$ be the global solution to the energy-critical quintic NLS \eqref{NLS1} with initial data $v(T_0)$. As before, we divide $[T_0, \infty)$ into
$J=J(E(v(T_0)) ,\eta)=J(N(v_0,\tau), \eta)$ many subintervals $I_j=[t_j,t_{j +1}]$ such that
\begin{equation} \label{CQQ1}
\| w \|_{\dot{X}^1(I_j)} = \|\nabla w\|_{L^{10}_tL^{\frac{30}{13}}_x(I_j \times \R^3)}\sim \eta,
\end{equation}
for some small $\eta> 0$ to be chosen later.  We write $[T_0, T_0+T] =\bigcup_{j=0}^{J'}\big([T_0,T_0+T ]\cap I_j\big) $ for some $J' \leq J$,
where $[T_0,T_0+T]\cap I_j\neq \emptyset$ for $0\leq j\leq J'$.

By the Strichartz estimate, Sobolev embedding, and \eqref{CQQ1}, we have
\begin{align*}
\big\| e^{i(t-t_j)\Dl} w(t_j)\big\|_{\dot{X}^1(I_j)}
&\leq \|w\|_{\dot{X}^1(I_j)} +C \|w^4\nabla w\|_{L^2_{t}L^\frac{6}{5}_x(I_j\times\R^3)}\\
& \leq \eta + C\|\nabla w\|_{L^{10}_{t} L^\frac{30}{13}_x (I_j \times\R^3)}\|w\|_{L^{10}_{t, x}(I_j\times \R^3)}^4\\
& \leq \eta + C\|\nabla w\|_{L^{10}_tL^\frac{30}{13}_x(I_j\times \R^3)}^5\\
& \leq \eta+C\eta^5,
\end{align*}
where $C$ is an absolute constant. Therefore, if $\eta$ is small enough, we obtain
\begin{align}\label{CQQ3}
\big\| e^{i(t-t_j)\Dl} w(t_j)\big\|_{\dot{X}^1(I_j)} \leq 2\eta.
\end{align}

Arguing similarly and using \eqref{CQQ0} and \eqref{CQQ3}, we obtain
\begin{align}
\|v\|_{\dot{X}^1(I_0)}
& \leq \| e^{it\Delta}v(T_0)\|_{\dot{X}^1(I_0)}+ C\|v^4\nabla v\|_{L^2_t L^\frac{6}{5}_x (I_0\times \R^3)}
+C\|v^3\nabla v\|_{L^\frac{20}{13}_t L^\frac{30}{22}_x (I_0\times \R^3)} \notag \\
& \hphantom{XXX}+ C\|v^2\nabla v\|_{L^\frac{5}{4}_t L^\frac{30}{19}_x (I_0\times \R^3)}
+C\|v\nabla v\|_{L^\frac{20}{19}_t L^\frac{30}{16}_x (I_0\times \R^3)} + C\|\nabla v\|_{L^1_t L^2_x (I_0\times \R^3)} \notag \\
& \leq \| e^{it\Delta}v(T_0)\|_{\dot{X}^1(I_0)}
 +C \sum_{\ell = 2}^5 |I_0|^{\frac{5-\ell}{4}}\| v\|_{L^{10}_{t, x} (I_0\times \R^3)}^{\ell-1} \|\nabla v\|_{L^{10}_tL^{\frac{30}{13}}_x(I_0 \times \R^3)} \notag \\
& \hphantom{XXX} +C|I_0|\|\nabla v\|_{L^\infty_tL^2_x(I_0 \times \R^3)} \notag \\
& \leq  2\eta +C \sum_{\ell = 2}^5T^{\frac{5-\ell}{4}} \|v\|^{\ell} _{\dot{X}^1(I_0)} + CTN(v_0, \tau).
\label{CQQ3c}
\end{align}
Choosing  $\eta \ll 1$ and  $T\ll \min\{1,\frac{\eta}{N(v_0, \tau)}\}$, it follows from a standard continuity argument that
 \begin{equation} \label{CQQ4}
  \|v\|_{\dot{X}^1(I_0)}\leq 3\eta.
\end{equation}

Then, by Sobolev embedding, $\|v\|_{L^{10}_{t, x} (I_0 \times \R^3)} \leq C\eta$ and so condition \eqref{P1} in Lemma \ref{LEM:Perturb} is satisfied with $L=C\eta$.
As in Section~\ref{SEC:GP}, $\eta$ is a small absolute constant.

By \eqref{CQQ0}, condition \eqref{P2} is satisfied with $E_0=N(v_0, \tau )$.  With $t_0 = T_0$, conditions \eqref{P3} and \eqref{P4} are automatically satisfied
(with $E' = 0$ for \eqref{P3}).

Let us now verify condition \eqref{P5}.  Recall that the perturbation $e$ in Lemma~\ref{LEM:Perturb} is $e= \RR(v)$ defined in \eqref{CQNLS4}.
Proceeding as in \eqref{CQQ3c} and using \eqref{CQQ4}, we have
\begin{align}
\|\nabla e\|_{\dot{N}^0(I_0)}
&\leq   C \sum_{\ell = 2}^4T^{\frac{5-\ell}{4}} \|v\|^{\ell} _{\dot{X}^1(I_0)} + CTN(v_0, \tau)\notag \\
& \leq C\sum_{\ell = 2}^4 T^{\frac{5-\ell}{4}} \eta^\ell + CTN(v_0, \tau).
\label{CQQ4a}
\end{align}
For any $\eps>0$, we may choose $T= T(N(v_0, \tau),  \eps)$ sufficiently small so that
$$
\|\nabla e\|_{\dot{N}^0(I_0)}\leq \eps.
$$
Hence, condition \eqref{P5} holds provided $\eps\leq \eps_0(N(v_0, \tau ))$ where $\eps_0(N(v_0, \tau ))$ is dictated by Lemma~\ref{LEM:Perturb}.

Therefore, all hypotheses of Lemma~\ref{LEM:Perturb} are satisfied on the interval $I_0$, provided that $T = T(N(v_0, \tau), \eps)$ is sufficiently small.
Thus we obtain
\begin{align}
\|w-v\|_{\dot{S}^1(I_0\times\R^3)} & \leq C\bigl(N(v_0, \tau )\bigr)\eps. \label{CQQ5b}
\end{align}

The rest of the argument follows exactly as in Section \ref{SEC:GP}; we omit the details.
At the end of the day,  we obtain
\begin{equation*}
\|v\|_{\dot{S}^1([T_0, T_0+T]\times \R^3)} \leq C\bigl(N(v_0, \tau )\bigr)= C\big(E(v_0), \lVert\Re(v_0)\rVert_{L^2(\R^3)}, \tau\big),
\end{equation*}
which settles \eqref{CQQ2} and thus the global existence question.

\subsection{Unconditional uniqueness}

We turn now to showing that the global solutions constructed above are unique among those that are continuous (in time) with values in the energy space.
The proof is similar to that for the Gross--Pitaevski equation given in Section~\ref{SEC:GP}.  Let $v_0$ be such that $1+v_0\in \mathcal{E}_\text{CQ}(\R^3)$
and let $v$ be the global solution to \eqref{CQNLS3} constructed above.  In particular, $v\in \dot S^1(I)$ for any compact time interval $I$.

Let $\tilde v:[0, \tau]\times\R^3\to \C$ be a second solution to \eqref{CQNLS3}
with the same initial condition such that $1+ \tilde v \in C_t ([0, \tau];\mathcal{E}_\text{CQ}(\R^3))$ and write
$\omega:=v-\tilde v$.  Arguing as in the previous section and shrinking $\tau$ if necessary, we may assume
\begin{align}\label{all small}
\|v\|_{L_{t,x}^{10}([0, \tau]\times\R^3)}&+ \|\omega\|_{L_t^\infty L^6_x([0, \tau]\times\R^3)} +\|\omega\|_{L_t^\infty L^4_x([0, \tau]\times\R^3)} \notag\\
&+ \lVert\Re(\omega)\rVert_{L_t^\infty H^1_x([0, \tau]\times\R^3)}+ \lVert\Im(\omega)\rVert_{L_t^\infty \dot H^1_x([0, \tau]\times\R^3)} \leq \eta
\end{align}
for a small $\eta>0$ to be chosen shortly.
In particular, we have that $\omega\in L^2_tL_x^6([0, \tau]\times\R^3) \cap L^{\frac{8}{3}}_tL_x^4([0, \tau]\times\R^3)$.

Writing
\begin{align*}
\bigl[|v|^4v +\RR(v) \bigr]- \bigl[|\tilde v|^4\tilde v + \RR(\tilde v)\bigr]= O\bigl(|\omega|^5 + |\omega||v|^4  + |\omega|^2 + |\omega||v| + \lvert\Re(\omega)|\bigr)
\end{align*}
and using the Strichartz inequality together with \eqref{all small} and H\"older, we estimate
\begin{align*}
&\|\omega\|_{L_t^2L_x^6} + \|\omega\|_{L_t^{\frac{8}{3}}L_x^4}
+ \lVert\Re(\omega)\rVert_{L_t^\infty L_x^2}\\
&\lesssim \|\omega^5\|_{L_t^2L_x^{\frac{6}{5}}} + \|\omega v^4\|_{L_t^{\frac{10}{9}}L_x^{\frac{30}{17}}} + \|\omega^2\|_{L_t^1L_x^2} +\|\omega v\|_{L_t^1L_x^2}
+\lVert\Re(\omega)\rVert_{L_t^1L_x^2}\\
&\lesssim \|\omega\|_{L_t^2L_x^6} \Bigl[\|\omega\|_{L_t^\infty L^6_x}^4 + \|v\|_{L_{t,x}^{10}}^4\bigr]
+ \tau^{\frac{5}{8}} \|\omega\|_{L_t^{\frac{8}{3}}L_x^4} \Bigl[\|\omega\|_{L_t^\infty L^4_x}+\|v\|_{L_t^\infty L^4_x}\Bigr] +\tau \lVert\Re(\omega)\rVert_{L_t^\infty L_x^2}\\
&\lesssim \eta^4\|\omega\|_{L_t^2L_x^6} +\tau^{\frac{5}{8}} \|\omega\|_{L_t^{\frac{8}{3}}L_x^4}+ \tau\lVert\Re(\omega)\rVert_{L_t^\infty L_x^2}.
\end{align*}
All spacetime norms in the display above are taken on $[0, \tau]\times\R^3$.  Taking $\eta$ sufficiently small and shrinking $\tau$ further if necessary, we obtain
$$
\|\omega\|_{L_t^2L_x^6([0, \tau]\times\R^3)} +\|\omega\|_{L_t^{\frac{8}{3}}L_x^4([0, \tau]\times\R^3)}+  \lVert\Re(\omega)\rVert_{L_t^\infty L_x^2([0, \tau]\times\R^3)}=0,
$$
which proves $v=\tilde v$ almost everywhere on $[0, \tau]\times\R^3$.

This completes the proof of unconditional uniqueness and thus the proof of Theorem~\ref{THM:CQ}.

\begin{ackno} \rm
We are grateful to Professor P.~G\'erard for bringing this problem to our attention.  R.~K.~was partially supported by NSF grant DMS-1001531.
T.~O.~acknowledges support from an AMS--Simons Travel Grant.  M.~V.~was partially supported by the
Sloan Foundation and NSF grant DMS-0901166.
\end{ackno}

\end{document}